\documentclass[a4paper,12pt]{amsart}
\usepackage{etex}
\usepackage{tikz, tikz-cd}

\usepackage{bm}
\usepackage{amssymb}
\usepackage{mathrsfs}
\usepackage{amsmath, amssymb,amsthm,latexsym,amscd,mathrsfs}
\usepackage{indentfirst}
\usepackage{stmaryrd}
\usepackage{graphicx}
\usepackage{subfigure}
\usepackage{extarrows}
\usepackage{amssymb}
\usepackage{amsmath,amssymb,amsthm,latexsym,amscd,mathrsfs}
\usepackage{indentfirst}
\usepackage{multirow}
\usepackage{latexsym}
\usepackage{amsfonts}
\usepackage{color}
\usepackage{pictexwd,dcpic}
\usepackage{graphicx}
\usepackage{psfrag}
\usepackage{hyperref}
\usepackage{comment}
\usepackage{cases}
\usepackage{dutchcal}

\makeatletter
\@namedef{subjclassname@2020}{{\rm2020} Mathematics Subject Classification}
\makeatother

 \DeclareMathOperator{\Id}{Id}

 \newcommand{\R}{\mathbb{R}}

 \newcommand{\ROM}[1]{\mathrm{\uppercase\expandafter{\romannumeral#1}}}
  \theoremstyle{definition}
   \numberwithin{equation}{section} \theoremstyle{plain}

 \newtheorem{thm}{Theorem}[section]
 \newtheorem{lem}{Lemma}[section]
 
 \newtheorem{cor}{Corollary}[section]
  
 \newtheorem{rem}{Remark}[section]
 \newtheorem{prop}{Proposition}[section]

  \makeatletter

  \numberwithin{equation}{section}
  \allowdisplaybreaks
\makeatother
\title[Homogeneity for $(g, m)=(6,2)$]{\textbf{A short proof of the homogeneity of an isoparametric hypersurface with} $(g, m)=(6, 2)$}
\author[Chao Qian]{Chao Qian}\address{School of Mathematics and Statistics, Beijing Institute of Technology, Beijing
100081, P.R. China}
\email{6120150035@bit.edu.cn}
\author[Z. Z. Tang]{Zizhou Tang}\address{Chern Institute of Mathematics $\&$ LPMC, Nankai University, Tianjin 300071, P. R. China}
\email{zztang@nankai.edu.cn}
\thanks {The project is partially supported by the NSFC (No.11931007, 12271038, 12371048, 12526205), and Nankai Zhide Foundation. %, and the Fundamental Research Funds for the Central Universities (No. 2233300002)
}

\subjclass[2020]{53C40, 53C07, 57R20.}
\keywords{Isoparametric hypersurface, Euler number, Chern-Weil theory, Ambrose-Singer connection.}

\begin{document}

\maketitle

\begin{abstract}
A well-known hypersurface classification theorem states that any isoparametric hypersurface in $S^{13}$ with six principal curvatures is homogeneous. This landmark result was first proved in her Annals paper in 2013 by R. Miyaoka with 58 pages, and with errata in Annals in 2016 with 15 pages. The main purpose of this paper is to provide a concise proof of this theorem through an interplay between topological and geometric insights into the Euler class of an oriented vector bundle.
\end{abstract}

\section{\textbf{Introduction}}\label{sec1}
An isoparametric hypersurface in the unit sphere is a hypersurface with
constant principal curvatures.  It belongs to a parallel family whose two
singular leaves are the focal submanifolds.  A fundamental theorem of
M\"{u}nzner \cite{Mu80} shows that the number $g$ of distinct principal curvatures can
only be $1,2,3,4$, or $6$, that the multiplicities satisfy
$m_i=m_{i+2}$ (indices modulo $g$), and that the family is described by a
Cartan--M\"{u}nzner polynomial. The cases $g=1,2$ are the
standard hyperspheres and generalized Clifford tori, while Cartan
classified the case $g=3$ \cite{Ca39}.  For $g=4$, the classification theorem \cite{CCJ07,Imm08,Chi11,Chi13,Chi20} states
that every one is of OT-FKM type or belongs to one of the exceptional homogeneous families
with multiplicities $(2,2)$ and $(4,5)$.

For $g=6$, M\"{u}nzner \cite{Mu80}
pointed out that the two multiplicities coincide,
and furthermore, Abresch \cite{Ab83b} proved that their common value must be $m=1$ or $2$.
Dorfmeister and Neher proved the homogeneity of the case $m=1$
\cite{DN85}, and Miyaoka \cite{Mi13} established the classification theorem in the most complicated case of $m=2$, together with a
subsequent erratum \cite{Mi16}.  The latter case deals with isoparametric hypersurfaces $M^{12}\subset S^{13}$.  The purpose of the present paper is
to give a short proof of this homogeneity result with quite different method.

\begin{thm}\label{thm:main}
Let $M^{12}\subset S^{13}$ be a connected closed isoparametric
hypersurface with $g=6$. Then $M$ is extrinsically homogeneous.
\end{thm}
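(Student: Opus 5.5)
The plan is to reduce homogeneity to the vanishing of a certain tensor on a focal submanifold, and then force that vanishing by computing an Euler number in two independent ways. We work with a focal submanifold $M_+^{10}\subset S^{13}$ of the family and its normal bundle $\nu$, which has rank $3$ in $S^{13}$. Following Miyaoka's reduction, homogeneity of $M$ is equivalent to the isospectrality of the shape operators $S_\xi$ on $M_+$; equivalently, the Jacobi operators $R(\cdot,\xi)\xi$ have the same eigenvalues for all unit normals $\xi$. By Dorfmeister--Neher type arguments, this in turn is equivalent to the vanishing of a certain symmetric cubic form $\Lambda$ built from the second fundamental form and its covariant derivative. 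Once isospectrality is established, we invoke Miyaoka's rigidity argument (or the known characterization of the homogeneous $G_2$-example): the Ambrose--Singer connection, obtained by modifying the Levi-Civita connection with the torsion coming from the parallel structure, then has parallel torsion and curvature. Hence $M_+$, and therefore $M$, is extrinsically homogeneous.

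The key step is to show that $\Lambda\equiv 0$, and for this we use Euler classes. We consider an auxiliary oriented vector bundle $E$ over $M$, or over the unit normal bundle of $M_+$, whose sections are naturally produced by $\Lambda$. The natural candidate is the rank-$12$ tangent bundle split into eigen-line-bundles, or a rank-$2$ subbundle of eigen-distributions twisted by $\nu$.

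\begin{enumerate}
\item \textbf{Topological side.} We compute the Euler number of $E$ from the cohomology of $M$. These cohomology groups are determined by M\"unzner's theory: $M$ has the $\mathbb{Z}_2$-Betti numbers of a sum with $2g=12$ total, and the integral structure is known from Abresch and Miyaoka's topology computations. The goal is to get a specific value, typically $0$ or a nonzero number fixed by $\chi$ of a focal manifold.
\item \textbf{Geometric side.} We represent the Euler class via Chern--Weil theory, using a connection on $E$ adapted to the isoparametric structure. In this connection the curvature is expressed through the constant principal curvatures $\cot(k\pi/6)$ plus terms quadratic in $\Lambda$. Integrating the Pfaffian then yields the value for the homogeneous case plus a correction term that has a definite sign, for instance $-c\int|\Lambda|^2$ with $c>0$.
\item \textbf{Comparison.} Equating the two computations forces the correction term to vanish, so $\Lambda\equiv 0$.
\end{enumerate}

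We expect the main obstacle to be step 2: choosing the connection so that the Pfaffian integrand splits cleanly into the homogeneous contribution and a sign-definite multiple of $|\Lambda|^2$. The cross terms must integrate to zero, by Stokes or by the Codazzi equations. Our first attempt will be the connection obtained by projecting the Levi-Civita connection onto the eigen-distributions, computing its curvature via the Gauss and Codazzi equations in the frame of Miyaoka's structure equations for $g=6$, $m=2$. If the correction is not sign-definite there, we would instead use the Ambrose--Singer-type connection with torsion $\Lambda$-independent, so that the discrepancy from Levi-Civita is linear in $\Lambda$. The topological value of step 1 is comparatively routine once the bundle is chosen.
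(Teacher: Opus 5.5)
Your proposal is a strategy outline rather than a proof, and its decisive step is left open. The bundle $E$, the connection, and even the tensor $\Lambda$ are never specified, and step~2 is explicitly a search (``if the correction is not sign-definite there, we would instead \dots'').

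The reduction you start from is also misstated. Isospectrality of the focal shape operators carries no information. For every isoparametric family with $(g,m)=(6,2)$ and every unit normal $\eta$ of a focal submanifold, $B_\eta$ has eigenvalues $\pm\sqrt3,\pm1/\sqrt3,0$, each of multiplicity two (see \eqref{ck:eq-focal-shape}), so this cannot be equivalent to homogeneity. What actually has to be proved is that $\ker B_n$ is independent of $n\in\nu_p^{S^{13}}N_+$ (Corollary~\ref{cor:focal-pencil-rigidity}). By Lemma~\ref{ck:lem-opposite} this gives $\alpha(D_a,D_{a+3},TM)=0$. Only then is the projection connection onto $\mathcal H_a=D_a\oplus D_{a+3}$ an Ambrose--Singer connection. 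That yields intrinsic homogeneity of $M$, which the rigidity theorem for type number at least three upgrades to extrinsic homogeneity.

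Moreover, the global bundles you propose over $M$ cannot detect anything. The $D_i$ have rank two, so there are no eigen-line-bundles. For $TM$ with its Levi-Civita connection, the Gauss equation expresses the curvature algebraically through $A$, whose eigenvalues are constant. Hence the Pfaffian is a constant multiple of the volume form, and Chern--Weil merely returns $\chi(M)$ for every isoparametric family, with no $\Lambda$-dependent term to exploit.

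The missing idea is where to run the Euler-number comparison: not over $M$, but over a normal sphere $\Sigma_p\cong S^2$ of a focal point, for the rank-two eigenbundle $\mathcal E_{\sqrt3}(p)$ of the pencil $n\mapsto B_n$.

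\emph{Topological side.} $dF_i$ identifies $D_j$ along a curvature sphere with $\eta_i^*\mathcal E_{\cot(\theta_j-\theta_i)}(p)$. So these Euler numbers are the integral pairings $\langle e(D_j),[L_i]\rangle$ on $M$, and Abresch's computation shows that $\mathcal E_{\sqrt3}(p)$ has absolute Euler number $3$ at every point of one of the two focal submanifolds. This is a substantial input, not a routine afterthought.

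\emph{Geometric side.} Put $a=\sqrt3$, $b=1/\sqrt3$. For the projected connection $P_aD$, the second-order spectral variation yields $\sum_{\nu\ne a}\|P_\nu B_uP_a\|_{\mathrm{HS}}^2/(a-\nu)=a|u|^2$. The spectral gap $a-\nu\ge a-b$ then gives the pointwise curvature bound $|\Omega_a|\le a/(a-b)=3/2$, hence $|d_a|\le 3$.

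Your intuition that an extremal Euler number forces a sign-definite defect to vanish is exactly what happens there. Equality forces $P_\nu B_uP_a=0$ for $\nu\in\{0,-b,-a\}$, and one further differentiation along great circles shows that $\ker B_n$ is a common kernel. Without this choice of base space, bundle, and inequality, however, your plan contains no mechanism that produces the vanishing you need.
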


The proof combines a global Euler-class calculation with a local
equality-case rigidity argument for the focal shape pencil. Using
the resulting canonical form of the focal shape pencil together
with the Codazzi and Gauss equations, we construct an Ambrose--Singer
connection and thereby establish homogeneity.

%----------------------------------------------------------------------------------------------------------------------------------------------------------------------------------------
\section{Euler class of principal distributions on curvature spheres}\label{sec2}

Let $M^{12}\subset S^{13}$ be a closed isoparametric hypersurface with six distinct principal curvatures. According to M\"{u}nzner \cite{Mu80} and Abresch \cite{Ab83b}, all six principal curvatures have multiplicity $2$. Let $N_+$ and $N_-$ be its two focal submanifolds which are both of dimension 10. For $p\in N_\pm$,
write
\[
 \Sigma_p=S(\nu_p^{S^{13}}N_\pm)\cong S^2
\]
for the unit normal sphere.

Let $B_\eta$ denote the shape operator of
$N_\pm\subset S^{13}$ at $p$ in the normal direction $\eta$. For each
eigenvalue $\mu$, define the $\mu$-eigenbundle as the smooth rank-two bundle(See Part (i) of Theorem 3.1)
\[
 \mathcal E_\mu(p)\longrightarrow\Sigma_p,
 \qquad
 \mathcal E_\mu(p)_\eta=\ker(B_\eta-\mu\Id).
\]
After choosing orientations of $\mathcal E_\mu$ and $\Sigma_p$, its Euler number is
\[
 \chi_\mu(p)=\left\langle
 e\bigl(\mathcal E_\mu(p)\bigr),[\Sigma_p]
 \right\rangle,
\]
where $ e\bigl(\mathcal E_\mu(p)\bigr)$ is the Euler class of $\mathcal E_\mu$, and $[\Sigma_p]$ is the fundamental class of $\Sigma_p$.
\begin{prop}\label{prop:focal-euler-numbers}
After interchanging $N_+$ and $N_-$ if necessary,, the absolute Euler
numbers are
\[
 \boxed{
 \begin{array}{c|ccccc}
 \mu&\sqrt{3}&1/\sqrt{3}&0&-1/\sqrt{3}&-\sqrt{3}\\ \hline
 p\in N_+&3&1&0&1&3\\
 p\in N_-&1&1&0&1&1
 \end{array}}
\]
at every point of the indicated focal submanifold.  In particular,
every such isoparametric family has exactly one focal submanifold for which the
$\sqrt{3}$-eigenbundle has absolute Euler number $3$ at every point.
\end{prop}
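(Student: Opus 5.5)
The plan is to work at a fixed focal point $p\in N_\pm$ and turn the pointwise linear algebra of the shape pencil $\eta\mapsto B_\eta$ on $\Sigma_p\cong S^2$ into relations among the Euler numbers. Fix an orientation of $\Sigma_p$ and note that each $\mathcal E_\mu(p)$ is an oriented plane bundle over $S^2$, hence a complex line bundle, classified by $\chi_\mu(p)\in\mathbb Z$. Two elementary facts come first.

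First, $B_{-\eta}=-B_\eta$, so the antipodal map $a$ of $\Sigma_p$ satisfies $a^*\mathcal E_\mu=\mathcal E_{-\mu}$. Since $a$ reverses the orientation of $S^2$, this gives $|\chi_\mu|=|\chi_{-\mu}|$, which accounts for the symmetry of each row of the table.

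Second, the eigenvalues of $B_\eta$ are $\cot(k\pi/6)$ for $k=1,\dots,5$, i.e.\ $\pm\sqrt3,\pm1/\sqrt3,0$, each of multiplicity $2$. Moreover $\bigoplus_\mu\mathcal E_\mu(p)=\Sigma_p\times T_pN_\pm$ is trivial. Taking $w_2$ gives $\sum_\mu\chi_\mu\equiv0 \pmod 2$, which is a consistency check.

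The real content comes from differentiating the pencil. For $\eta\in\Sigma_p$ and $\xi\in T_\eta\Sigma_p$, differentiating $B_\eta v=\mu v$ along $\xi$ shows that the off-diagonal blocks of $B_\xi$ define bundle maps
\[
T\Sigma_p\otimes\mathcal E_\mu\longrightarrow\mathcal E_{\mu'} .
\]
Using the $g=6$ structure (the Münzner identities for the Cartan--Münzner polynomial restricted to the normal sphere, i.e.\ the fact that $B_\xi$ links $\mathcal E_{\cot(k\pi/6)}$ only to the neighbouring levels), I expect these maps to be isomorphisms, possibly after composing with conjugation, on suitable pairs $(\mu,\mu')$. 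Here I will use Part (i) of Theorem 3.1 for the smoothness and constancy of ranks. Since $c_1(T\Sigma_p)=2$, each such isomorphism $T\Sigma_p\otimes_{\mathbb C}\mathcal E_\mu\cong\mathcal E_{\mu'}$ or $\overline{\mathcal E_{\mu'}}$ yields a relation
\[
\chi_{\mu'}=\pm(\chi_\mu+2).
\]
Propagating these relations from the outer eigenvalue $\sqrt3$ inward, together with the antipodal symmetry, should leave exactly two possible rows, $(3,1,0,1,3)$ and $(1,1,0,1,1)$. In particular, the chain $-3+2=-1$ and $-1+2=1$ matches the stated values. For the middle eigenvalue I expect to show directly that $\mathcal E_0(p)$ is trivial, for instance by exhibiting a nonvanishing section. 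This could come from identifying $\ker B_\eta$ with a fixed subspace via the map $\xi\mapsto B_\xi|_{\ker B_\eta}$, or else from $a^*\mathcal E_0=\mathcal E_0$ forcing $\chi_0=-\chi_0$.

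The last step is to show that the two rows occur on different focal submanifolds, with the same row at every point of a given focal submanifold. Constancy along each $N_\pm$ follows by continuity of the integer-valued Euler numbers on a connected manifold. To see that the two focal submanifolds carry different rows, I will relate the eigenbundles at $p\in N_+$ to the normal sphere bundles at the antipodal-type points of $N_-$ reached along the normal geodesics: the curvature spheres of $M$ through $\Sigma_p$ correspond to eigen-distributions of both focal manifolds. This should exchange the roles of the end values. The cleanest route I can see is a global count, for instance an Euler-characteristic or cohomological computation of $N_\pm$ using Münzner's result $H^*(N_\pm)$ together with the sphere-bundle structure $M\to N_\pm$.

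I expect this step to be the main obstacle: showing that one focal submanifold cannot carry $(1,1,0,1,1)$ on both sides, and that the one with $|\chi_{\sqrt3}|=3$ is unique. My first attempt will be to compute the Euler class of the vertical distribution of $M\to N_\pm$ in two ways.
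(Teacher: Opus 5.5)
There is a genuine gap. The pointwise mechanism your plan rests on is not available, and pointwise data cannot produce the table in any case.

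\textbf{The pointwise step fails.} The claim that $B_\xi$ links $\mathcal E_{\cot(k\pi/6)}$ only to neighbouring levels does not follow from M\"unzner's identities. It is in fact false on one of the two focal submanifolds. Using the formulas of Steps~2--3 of Theorem~\ref{thm:extremal-pencil}, suppose $B_u\mathcal E_{\sqrt3}(n)\subset\mathcal E_{1/\sqrt3}(n)$ for all $n\in\Sigma_p$ and $u\perp n$. Then $C(u)=P_{1/\sqrt3}B_uP_{\sqrt3}$ satisfies $C(u)^tC(u)=|u|^2\Id$, the curvature of $\nabla^{\sqrt3}$ is $\pm\frac32$ everywhere, and so $|\chi_{\sqrt3}(p)|=3$. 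Hence wherever $|\chi_{\sqrt3}(p)|=1$, the extreme block leaks into other levels.

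Moreover, a nonzero real block $T\Sigma_p\times\mathcal E_\mu\to\mathcal E_{\mu'}$ gives a complex-linear or antilinear isomorphism $T\Sigma_p\otimes_{\mathbb C}\mathcal E_\mu\cong\mathcal E_{\mu'}$ only if it is conformal in $u$. That is exactly the equality-case information you do not have, so the relations $\chi_{\mu'}=\pm(\chi_\mu+2)$ are unfounded.

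Isospectrality, antipodal symmetry and parity also cannot cut the possibilities down to two rows. Take $B_n=\sqrt3\,(n\cdot\vec J)\oplus\frac1{\sqrt3}(n\cdot\vec\sigma)$ on $\mathbb C^3\oplus\mathbb C^2\cong\R^{10}$, where $\vec J$ are the spin-$1$ generators and $\vec\sigma$ the Pauli matrices. This is a linear pencil with exactly the required spectrum and multiplicities, yet its absolute row is $(2,1,0,1,2)$.

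Finally, since $m_1=m_2$, M\"unzner's equations give $\Delta F=0$. So $-F$ is again a Cartan--M\"unzner polynomial, and it interchanges $N_+$ and $N_-$. Any pointwise consequence of these identities therefore holds equally on both focal submanifolds and cannot show that they carry different rows. Your last step is thus unavoidable and must be global, and it is the step you leave open.

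\textbf{The missing idea is the paper's transfer to $M$.} For $p\in N_i=F_i(M)$, the map $\eta_i\colon F_i^{-1}(p)\to\Sigma_p$ is a diffeomorphism. By the Weingarten formula, $dF_i$ gives $D_j|_{F_i^{-1}(p)}\cong\eta_i^*\mathcal E_{\cot(\theta_j-\theta_i)}(p)$. Hence $|\chi_{\cot((j-i)\pi/6)}(p)|=|\langle e(D_j),[L_i]\rangle|$ for a curvature sphere $L_i$. These integers are independent of $p$ because the fibres are homologous, and they are computed from the cohomology of $M$ (Abresch). The alternation of $N_i$ between $N_+$ and $N_-$ is what separates the two rows. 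Your suggestion to compute the Euler class of the vertical distribution of $M\to N_\pm$ points in this direction; once carried out, it makes the pointwise propagation superfluous.

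\textbf{Two smaller points about $\chi_0$.} The antipodal argument works only if the identification $a^*\mathcal E_0=\mathcal E_0$ preserves orientation, and that needs proof. One way is to show that $\mathcal E_0$ descends to an orientable bundle over $\mathbb{RP}^2$. The fixed-kernel route is circular: it uses Corollary~\ref{cor:focal-pencil-rigidity}, which is itself deduced from this proposition.
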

\begin{proof}
We will reduce this assertion to integral Euler pairings on the curvature
spheres. According to M\"{u}nzner \cite{Mu80}, $M$ is simply connected and orientable. Choose a global unit normal $\xi$ on $M$ and write
\[
 \lambda_i=\cot\theta_i,
 \qquad
 \theta_i=\theta_1+\frac{(i-1)\pi}{6},
 \qquad
 0<\theta_1<\frac{\pi}{6}.
\]
Let $D_i$ be the corresponding rank-two principal-curvature
distribution.  Each leaf of $D_i$ is a curvature $2$-sphere, and the
focal map
\[
 F_i:M\longrightarrow N_i:=F_i(M),
 \qquad
 F_i(x)=\cos\theta_i\,x+\sin\theta_i\,\xi(x),
\]
is a sphere bundle whose fibers are precisely these leaves. The images
$N_i$ alternate between the two focal submanifolds $N_{\pm}$.

Since $M$ is simply connected, every $D_i$ is orientable.  Choose
orientations and define
\[
 c_i=e(D_i)\in H^2(M;\mathbb Z).
\]
Orient every $D_i$-leaf $L_i$ by $D_i|_{L_i}=TL_i$, and let
\[
 b_i=[L_i]\in H_2(M;\mathbb Z).
\]
Fibers of the same oriented sphere bundle are homologous, so $b_i$ does
not depend on the chosen leaf. Define
\begin{equation}\label{eq:euler-pairings}
 a_{ij}:=\langle c_j, b_i\rangle,
\end{equation}
where
\[
\langle\cdot,\cdot\rangle:
H^2(M;\mathbb Z)\times H_2(M;\mathbb Z)\longrightarrow \mathbb Z
\]
denotes the Kronecker pairing.
The diagonal entries are
\begin{equation}\label{eq:euler-pairing-diagonal}
 a_{ii}=\langle e(TL_i),[L_i]\rangle=\chi(S^2)=2.
\end{equation}

%Then, orient $M$ by the decomposition
%\[
% TM=D_1\oplus\cdots\oplus D_6.
%\]
%The product formula for Euler classes gives
%\[
% e(TM)=c_1c_2c_3c_4c_5c_6.
%\]
%Its evaluation on $[M]$ is $\chi(M)=12$, and hence
%\begin{equation}\label{eq:euler-product-nonzero}
% \boxed{c_1c_2c_3c_4c_5c_6\ne0.}
%\end{equation}
%Here we use the standard identification of the tangent-bundle Euler
%number with the Euler characteristic \cite{MS}.

It remains to relate the focal eigenbundles to the pairings
\eqref{eq:euler-pairings}. Fix $i$ and $p\in N_i$, and define
\[
 \eta_i(x)=-\sin\theta_i\,x+\cos\theta_i\,\xi(x).
\]
For $x\in F_i^{-1}(p)$, this is a unit normal vector to $N_i$ at $p$.
The map
\[
 \eta_i:F_i^{-1}(p)\longrightarrow\Sigma_p
\]
is a diffeomorphism, whose inverse is
\[
 x=\cos\theta_i\,p-\sin\theta_i\,\eta.
\]
Take $X\in D_j(x)$, where $j\ne i$.  Since
$d\xi(X)=-\lambda_jX$, we have
\begin{align}
 dF_i(X)
 &=\bigl(\cos\theta_i-\lambda_j\sin\theta_i\bigr)X \notag\\
 &=\frac{\sin(\theta_j-\theta_i)}{\sin\theta_j}X,
 \label{eq:differential-focal-map}
\end{align}
and
\begin{align}
 d\eta_i(X)
 &=-\bigl(\sin\theta_i+\lambda_j\cos\theta_i\bigr)X \notag\\
 &=-\frac{\cos(\theta_j-\theta_i)}{\sin\theta_j}X.
 \label{eq:differential-normal-map}
\end{align}
The coefficient in \eqref{eq:differential-focal-map} is nonzero.  Since
$dF_i(X)\in T_pN_i$, it follows that the ambient vector $X$ lies in
$T_pN_i$; hence \eqref{eq:differential-normal-map} is tangent to $N_i$
as well.  The Weingarten formula then yields
\begin{equation}\label{eq:focal-weingarten}
 B_{\eta_i(x)}dF_i(X)
 =-d\eta_i(X)
 =\cot(\theta_j-\theta_i)\,dF_i(X).
\end{equation}
Consequently,
\begin{equation}\label{eq:eigenbundle-pullback}
 dF_i:D_j|_{F_i^{-1}(p)}\xrightarrow{\ \cong\ }
 \eta_i^*\mathcal E_{\cot(\theta_j-\theta_i)}(p).
\end{equation}

As $j-i$ runs through $1,\ldots,5$ modulo six, the eigenvalues are
\[
 \cot\frac{\pi}{6},\quad
 \cot\frac{2\pi}{6},\quad
 \cot\frac{3\pi}{6},\quad
 \cot\frac{4\pi}{6},\quad
 \cot\frac{5\pi}{6},
\]
namely
\[
 \sqrt{3},\quad \frac{1}{\sqrt{3}},\quad 0,
 \quad-\frac{1}{\sqrt{3}},\quad-\sqrt{3}.
\]
Each eigenspace has dimension two.  The eigenbundles are smooth, since their spectral projections are polynomials in the
smoothly varying operator $B_\eta$. By naturality of the integral Euler
class, \eqref{eq:eigenbundle-pullback} implies
\begin{equation}\label{eq:focal-euler-pairing}
 \left|\chi_{\cot((j-i)\pi/6)}(p)\right|=|a_{ij}|.
\end{equation}
Thus, determining the Euler numbers of the focal eigenbundles reduces precisely to computing the integral pairings $a_{ij}$. Finally, the proof follows from the computation at P. 78--79 of \cite{Ab83a}.
\end{proof}

%----------------------------------------------------------------------------------------------------------------
\section{Euler number inequality and rigidity}\label{sec3}
This section prepare a real algebraic statement that will be applied to
the normal shape pencil of a focal submanifold. The Euler number equality
needed for rigidity comes from Proposition~\ref{prop:focal-euler-numbers}.

Let $(V, \langle\cdot,\cdot\rangle)$ be a ten-dimensional Euclidean space and let
\[
 B:\R^3\longrightarrow\operatorname{Sym}(V)
\]
be linear.  Suppose that, for every unit vector $n\in S^2$, the self-adjoint operator $B_n$ has eigenvalues given by
\[
 \operatorname{Spec}(B_n)
 =\left\{\sqrt3,\frac1{\sqrt3},0,-\frac1{\sqrt3},-\sqrt3\right\},
\]
each eigenvalue having multiplicity two.  Put
\[
 a=\sqrt3,\qquad b=\frac1{\sqrt3},\qquad
 c=\frac2{\sqrt3},\qquad
 \Lambda=\{a,b,0,-b,-a\},
\]
and let
\[
 \mathcal E_\lambda(n)=\ker(B_n-\lambda\Id),
 \qquad \lambda\in\Lambda,
\]
be the corresponding eigenbundles over $S^2$.  After choosing
orientations, write
\[
 d_\lambda=\left\langle e(\mathcal E_\lambda),[S^2]\right\rangle.
\]
Then the main result is given as follows.
\begin{thm}
\label{thm:extremal-pencil}
\begin{enumerate}
\item Each $\mathcal E_\lambda$ is a smooth orientable real
rank-two bundle, and
\[
 S^2\times V=\bigoplus_{\lambda\in\Lambda}\mathcal E_\lambda,~~
 |d_{-\lambda}|=|d_\lambda|~(\lambda=a,b).
 %\qquad d_0=0.
\]
%With compatible orientations one may arrange
%$d_{-\lambda}=-d_\lambda$ for $\lambda=a,b$.

\item The two extreme eigenbundles satisfy the sharp estimate
\[
 |d_a|=|d_{-a}|\le3.
\]

\item Equality $|d_a|=3$ holds if and only if there are an orthonormal
basis $n_1,n_2,n_3$ of $\R^3$ and an orthogonal decomposition
\[
 V=K\oplus W_a\oplus W_b\oplus W_{-b}\oplus W_{-a},
 \qquad \dim K=\dim W_\lambda=2,
\]
for which
\[
 K=\bigcap_{\xi\in\R^3}\ker B_\xi
\]
and the following real block formula holds.  Set
\[
 I=I_2,\qquad
 J=\begin{pmatrix}0&-1\\1&0\end{pmatrix},
 \qquad Q(x,y)=xI+yJ.
\]
Choose suitable orthonormal bases in the four real planes.  Then
\[
 B_{xn_1+yn_2+zn_3}|_K=0,
\]
and, relative to the ordered decomposition
$W_a\oplus W_b\oplus W_{-b}\oplus W_{-a}$,
\begin{equation}\label{eq:canonical-extremal-pencil}
 B_{xn_1+yn_2+zn_3}|_{K^\perp}
 =
 \begin{pmatrix}
 azI&Q(x,y)&0&0\\
 Q(x,y)^t&bzI&cQ(x,y)&0\\
 0&cQ(x,y)^t&-bzI&Q(x,y)\\
 0&0&Q(x,y)^t&-azI
 \end{pmatrix}.
\end{equation}
Here every entry in this display is a real $2$-by-$2$ block.
%In the equality
%case,
%\[
%  (|d_a|,|d_b|,|d_0|,|d_{-b}|,|d_{-a}|)
% =(3,1,0,1,3).
%\]
%With compatible real orientations, the signed row is
%\[
% \varepsilon(3,1,0,-1,-3),\qquad \varepsilon\in\{1,-1\}.
%\]
\end{enumerate}
\end{thm}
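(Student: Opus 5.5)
Part (i) is routine and we dispose of it first. Since the eigenvalues of $B_n$ are constant and simple as a set, the spectral projection onto $\ker(B_n-\lambda\Id)$ is a fixed polynomial in $B_n$, namely $\prod_{\mu\ne\lambda}(B_n-\mu\Id)/(\lambda-\mu)$. It therefore depends smoothly on $n\in S^2$, so each $\mathcal E_\lambda$ is a smooth rank-two subbundle and $S^2\times V=\bigoplus\mathcal E_\lambda$. Orientability is automatic because $S^2$ is simply connected. Linearity gives $B_{-n}=-B_n$, hence $\mathcal E_{-\lambda}(n)=\mathcal E_\lambda(-n)$, i.e. $\mathcal E_{-\lambda}=\alpha^*\mathcal E_\lambda$ with $\alpha$ the antipodal map. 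Since $\deg\alpha=-1$, this yields $|d_{-\lambda}|=|d_\lambda|$ and also $d_0=-d_0$ up to sign conventions.

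For (ii) we plan a Chern--Weil/curvature estimate. Write $P_\lambda(n)$ for the spectral projection and compute the curvature of the induced (projected) connection on $\mathcal E_a$: $F=P_a\,dP_a\wedge dP_a\,P_a$. Differentiating $B_nP_a=aP_a$ along $v\in T_nS^2$ gives, for $\mu\ne a$,
\[
P_\mu\,(dP_a(v))\,P_a=(a-\mu)^{-1}P_\mu B_vP_a .
\]
Hence $|d_a|=\frac1{2\pi}\bigl|\int_{S^2}\operatorname{tr}(JF)\bigr|$ is bounded by $\frac1{2\pi}\int\sum_{\mu\ne a}(a-\mu)^{-2}|P_\mu B_{v_1}P_a|\,|P_\mu B_{v_2}P_a|$ for an orthonormal frame $v_1,v_2$ of $T_nS^2$. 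Next we need pointwise constraints on the off-diagonal blocks $P_\mu B_vP_a$ with $v\perp n$. The key identities come from expanding the spectral condition $\prod_{\lambda}(B_{n}-\lambda)=0$ (degree $5$) along $n+tv$ and using $|n+tv|^2=1+t^2$, i.e. $\prod_\lambda(B_{n+tv}-\lambda\sqrt{1+t^2})\equiv0$. Equivalently, we use that the trace identities $\operatorname{tr}B_n^{k}$ are constant on $S^2$, so that $\operatorname{tr}(B_n^{k-1}B_v)=0$ together with the second-order terms; these give relations such as $\sum_\mu|P_\mu B_vP_a|^2$ computed from $\operatorname{tr}(B_n^jB_vB_n^kB_v)$. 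The first-order perturbation forces $P_aB_vP_a=0$ (eigenvalues do not move). The second-order condition forces $\sum_{\mu\ne a}|P_\mu B_vP_a|^2/(a-\mu)=$ the second derivative of the constant eigenvalue $a\sqrt{1+t^2}$, i.e. $a/2$ per unit vector (suitably normalised). Combining this with higher-order expansions should show that only $\mu=b$ can couple to $a$. Heuristically, $\sqrt3$ and $1/\sqrt3$ are the ends of a $\cot$ chain, and the pointwise bound $|\operatorname{tr}(JF)|\le 3\,\mathrm{vol}$ then integrates to $|d_a|\le 3$.

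For (iii) we use equality in every Cauchy--Schwarz step. This forces the blocks $P_bB_vP_a$ to be conformal, of the form $Q(x,y)$, and forces pure coupling of the form $a\leftrightarrow b\leftrightarrow -b\leftrightarrow -a$ with the coefficients $1,c,1$ dictated by the second-order identities at each eigenvalue ($b$ gives $-1/(a-b)+c^2/(2b)\cdot\ldots$). Fix $n_3$ and pick bases at $n_3$ adapted to these conformal blocks; linearity in $(x,y)$ then produces the matrix \eqref{eq:canonical-extremal-pencil}. The $0$-eigenspace at $n_3$ decouples, so it equals $K=\bigcap\ker B_\xi$. The converse is a direct check that this pencil has the stated spectrum and that $\mathcal E_a$ has degree $3$: for instance, it is $SU(2)$/$\mathrm{Sym}^3$-equivariant, so $\mathcal E_a$ is the line bundle $\mathcal O(3)$. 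The main obstacle is the sharp pointwise curvature bound in (ii) and extracting exact equality conditions from it. If the Cauchy--Schwarz route is too lossy, I would instead complexify: use the $J$-structure on each $\mathcal E_\lambda$ to view the pencil as a holomorphic Higgs-type chain and bound degrees by counting chain length.
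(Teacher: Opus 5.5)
Part (i) is fine and matches the paper. The proposal has a genuine gap in (ii), the central estimate. You have the right ingredients: the projected-connection curvature, the first-variation formula $P_\mu\,dP_a(v)\,P_a=(a-\mu)^{-1}P_\mu B_vP_a$, and the second-variation identity, whose trace is $\sum_{\mu\ne a}\|P_\mu B_vP_a\|_{\mathrm{HS}}^2/(a-\mu)=a|v|^2$. But you then try to close the estimate by proving that only $\mu=b$ couples to $a$, and that claim is false for general pencils with this spectrum.

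To see this, suppose $P_\mu B_vP_a=0$ for all $\mu\ne b$ at every $n$. The second-variation identity then gives $(P_bB_vP_a)^t(P_bB_vP_a)=\frac{a(a-b)}2|v|^2\Id=|v|^2\Id$. Polarization makes $(P_bB_{v_1}P_a)^t(P_bB_{v_2}P_a)$ skew and orthogonal, hence $\pm J$. The curvature formula then gives $|\Omega_a|\equiv 2/(a-b)^2=3/2$ with constant sign, so $|d_a|=3$. Your intermediate claim is therefore equivalent to the equality case. It would make the ``if and only if'' in (iii) vacuous, it contradicts the $N_-$ row $(1,1,0,1,1)$ of Proposition~\ref{prop:focal-euler-numbers}, and no higher-order expansion can prove it.

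The missing idea is a spectral-gap estimate. Since $a$ is the top eigenvalue and $b$ the next, $a-\mu\ge a-b>0$, so $(a-\mu)^{-2}\le (a-b)^{-1}(a-\mu)^{-1}$. Feed this into your own Cauchy--Schwarz bound (with Hilbert--Schmidt norms), then apply Cauchy--Schwarz over $\mu$ with the positive weights $1/(a-\mu)$:
\[
|\Omega_a|\le\frac1{a-b}\Bigl(\sum_{\mu\ne a}\frac{\|C_\mu(v_1)\|_{\mathrm{HS}}^2}{a-\mu}\Bigr)^{1/2}\Bigl(\sum_{\mu\ne a}\frac{\|C_\mu(v_2)\|_{\mathrm{HS}}^2}{a-\mu}\Bigr)^{1/2}=\frac{a}{a-b}=\frac32 .
\]
This holds pointwise and integrates to $|d_a|\le 3$. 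Note the normalization is $|d_a|=\frac1{4\pi}|\int\operatorname{tr}(JF)|$, not $\frac1{2\pi}$. Coupling of $a$ to $\mu\in\{0,-b,-a\}$ is excluded only afterwards, in the equality case, because the gap inequality is strict for those $\mu$.

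Your sketch of (iii) also skips two steps that do not follow from equality in the estimate.

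\emph{Decoupling of the zero eigenspace.} Equality at $a$, and via the antipodal map at $-a$, only kills $P_\nu B_uP_{\pm a}$ for $\nu\ne\pm b$. It says nothing about $V_0=P_bB_uP_0$ or $P_{-b}B_uP_0$. The second-variation identities at $b$ and at $0$ only give $2V_0V_0^t+MM^t=\frac43|u|^2\Id$ and $V_0^tV_0=(P_{-b}B_uP_0)^t(P_{-b}B_uP_0)$, and neither forces $V_0=0$. So both ``the $0$-eigenspace decouples'' and the coefficient $c$ need an extra argument. The paper (Step 5) differentiates $P_a(t)B_{m(t)}P_0(t)=0$ along a great circle, obtaining $-\frac1{2b}UV_0=0$ with $U$ invertible.

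\emph{Sign consistency.} After the successive normalization $U(n_1)=I$, $M(n_1)=cI$, $Z(n_1)=I$, the norm identities give $U(n_2)=\varepsilon_1J$, $M(n_2)=c\varepsilon_2J$, $Z(n_2)=\varepsilon_3J$. Basis changes preserving the $n_1$-normalization act by one common element of $O(2)$ on all four planes, so they can only flip all three signs together. Hence ``linearity in $(x,y)$'' does not yield \eqref{eq:canonical-extremal-pencil}. You need a further first-order identity, as in Step 7: differentiate $P_a(\gamma(t))B_{n_2}P_{-b}(\gamma(t))=0$ and $P_b(\gamma(t))B_{n_2}P_{-a}(\gamma(t))=0$ along $\gamma(t)=\cos t\,n_3+\sin t\,n_1$ to force $\varepsilon_1=\varepsilon_2=\varepsilon_3$.

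On the positive side, your spin-$3/2$ ($\mathrm{Sym}^3$) equivariance argument for the converse is correct. It supplies the ``if'' direction, which the paper does not write out.
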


\begin{proof}
We divide the proof into seven steps.

\smallskip
\noindent\textit{1). Eigenbundles and their elementary topology.}
For $\lambda\in\Lambda$, the spectral projection is
\begin{equation}\label{eq:spectral-projector}
 P_\lambda(n)=
 \prod_{\substack{\mu\in\Lambda\\ \mu\ne\lambda}}
 \frac{B_n-\mu\Id}{\lambda-\mu}.
\end{equation}
It is a polynomial in $B_n$, hence depends smoothly on $n$, and it has
constant rank two.  This proves that $\mathcal E_\lambda$ is a smooth
Euclidean plane bundle.  It is orientable because
$H^1(S^2;\mathbb Z_2)=0$.

Let $\alpha:S^2\to S^2$ be the antipodal map.  Since
$B_{-n}=-B_n$,
\[
 \mathcal E_{-\lambda}\simeq\alpha^*\mathcal E_\lambda.
\]
The degree of $\alpha$ is $-1$, which gives
$|d_{-\lambda}|=|d_\lambda|$ and, with compatible orientation choices,
$d_{-\lambda}=-d_\lambda$ for $\lambda=a,b$.

%We also record why $d_0=0$.  Let
%$q:S^2\to\mathbb{RP}^2$ be the antipodal quotient.  For
%$\lambda=a,b$, the rank-four bundle
%\[
% \overline{\mathcal F}_\lambda([n])
% =\mathcal E_\lambda(n)\oplus\mathcal E_{-\lambda}(n)
%\]
%is well defined over $\mathbb{RP}^2$.  Choose a smooth orientation
%$o(n)$ of $\mathcal E_\lambda(n)$ and orient the pullback of
%$\overline{\mathcal F}_\lambda$ by $o(n)\wedge o(-n)$.  The deck
%transformation exchanges two two-dimensional factors, so it preserves
%this orientation.  Thus $\overline{\mathcal F}_a$ and
%$\overline{\mathcal F}_b$ are oriented.  A fixed orientation of $V$
%then orients their rank-two complement
%$\overline{\mathcal E}_0$, and
%$\mathcal E_0=q^*\overline{\mathcal E}_0$.  Since
%\[
% H^2(\mathbb{RP}^2;\mathbb Z)\simeq\mathbb Z_2,
% \qquad H^2(S^2;\mathbb Z)\simeq\mathbb Z,
%\]
%the homomorphism $q^*$ in degree two is zero.  Naturality of the Euler
%class gives $e(\mathcal E_0)=0$.

\smallskip
\noindent\textit{2). Two spectral variation identities.}
Fix orthonormal vectors $n,u\in\R^3$ and put
\[
 n(t)=\cos t\,n+\sin t\,u,\qquad P_\mu(t)=P_\mu(n(t)).
\]
Linearity gives
\[
 B_{n(t)}=\cos t\,B_n+\sin t\,B_u,\qquad
 B_{n(t)}'(0)=B_u,\qquad B_{n(t)}''(0)=-B_n.
\]
Differentiate
\[
 B_{n(t)}P_\mu(t)=\mu P_\mu(t).
\]
At $t=0$ this gives
\[
 B_uP_\mu(n)+(B_n-\mu\Id)P_\mu'(0)=0.
\]
Multiplication on the left by $P_\mu(n)$ yields
\begin{equation}\label{eq:diagonal-block-zero}
 P_\mu(n) B_uP_\mu(n)=0.
\end{equation}
For $\nu\ne\mu$, multiplication on the left by $P_\nu(n)$, together with
the derivative of $P_\mu(n)^2=P_\mu(n)$, gives the full formula
\begin{equation}\label{eq:first-spectral-variation}
 P_\mu'(0)
 =\sum_{\nu\ne\mu}
 \frac{P_\nu(n) B_uP_\mu(n)+P_\mu(n) B_uP_\nu(n)}{\mu-\nu}.
\end{equation}

Differentiate the eigenprojection equation once more:
\[
 -B_nP_\mu(n)+2B_uP_\mu'(0)
 +(B_n-\mu\Id)P_\mu''(0)=0.
\]
Multiplying on the left and right by $P_\mu(n)$ and using
\eqref{eq:first-spectral-variation}, we obtain
\begin{equation}\label{eq:second-spectral-variation}
 \sum_{\nu\ne\mu}
 \frac{P_\mu(n) B_uP_\nu(n) B_uP_\mu(n)}{\mu-\nu}
 =\frac{\mu}{2}P_\mu(n).
\end{equation}
For an arbitrary $u\perp n$, the right-hand side is
$\frac{\mu}{2}|u|^2P_\mu(n)$ by homogeneity.

\smallskip
\noindent\textit{3). The Euler number estimate.}
Fix $n\in S^2$, and let $u_1,u_2$ be an oriented orthonormal basis of
$T_nS^2$.  For $\nu\ne a$, put
\[
 C_\nu(u)=P_\nu(n) B_uP_a(n):\mathcal E_a(n)\longrightarrow \mathcal E_\nu (n).
\]
Taking the trace of \eqref{eq:second-spectral-variation} at $\mu=a$
gives
\begin{equation}\label{eq:extreme-trace-identity}
 \sum_{\nu\ne a}
 \frac{\|C_\nu(u)\|_{\mathrm{HS}}^2}{a-\nu}
 =a|u|^2.
\end{equation}
By definition, for any orthonormal basis $e_1, e_2$ of $\mathcal E_a(n)$, the Hilbert--Schmidt norm of $C_\nu(u)$ is given by
$$\|C_\nu(u)\|_{\mathrm{HS}}^2:=|C_\nu(u)(e_1)|^2+|C_\nu(u)(e_2)|^2.$$

Equip $\mathcal E_a$ with the projected Euclidean connection
$\nabla^a=P_aD$, where $D$ is the flat connection on $S^2\times V$.
Recall that the curvature tensor $R^a$ of $\nabla^a$ is defined by
$$R^a(u, v):=\nabla^a_u\nabla^a_v-\nabla^a_v\nabla^a_u-\nabla^a_{[u,v]}.$$

Formula \eqref{eq:first-spectral-variation} gives its curvature as
\begin{equation}\label{eq:eigenbundle-curvature}
 R^a(u,v)=
 \sum_{\nu\ne a}
 \frac{C_\nu(u)^tC_\nu(v)-C_\nu(v)^tC_\nu(u)}
 {(a-\nu)^2}.
\end{equation}
Let  $e_1, e_2$ be an oriented orthonormal basis of $\mathcal E_a(n)$,
and define
\[
 \Omega_a(n):=\langle R^a(u_1,u_2)e_2, e_1\rangle.
\]
For any two real maps $S,T:E\to F$ from the same oriented Euclidean
plane $E$ to the same Euclidean space $F$, and for an oriented
orthonormal basis $f_1, f_2$ of $E$, the elementary inequality
\[
 |\langle Tf_1,Sf_2\rangle-\langle Sf_1,Tf_2\rangle|
 \le\frac12\bigl(\|S\|_{\mathrm{HS}}^2+\|T\|_{\mathrm{HS}}^2\bigr)
\]
follows by completing two squares.  Applying this to each term in
\eqref{eq:eigenbundle-curvature}, set
\[
 A(n):=\frac12\sum_{\nu\ne a}
 \frac{\|C_\nu(u_1)\|_{\mathrm{HS}}^2+
       \|C_\nu(u_2)\|_{\mathrm{HS}}^2}
 {(a-\nu)^2}.
\]
Then $|\Omega_a(n)|\le A(n)$.  Since $a-\nu\ge a-b$ and every
denominator is positive,
\[
 \frac1{(a-\nu)^2}
 \le\frac1{(a-b)(a-\nu)}.
\]
Using \eqref{eq:extreme-trace-identity} for $u_1$ and $u_2$, we obtain
\begin{equation}\label{eq:pointwise-euler-bound}
 |\Omega_a(n)|\le A(n)\le\frac{a}{a-b}=\frac32.
\end{equation}
By Chern-Weil theory \cite{MS74}, the real Euler curvature formula for an oriented Euclidean plane bundle
now gives
\begin{equation}\label{eq:euler-bound-chain}
 2\pi|d_a|
 =\left|\int_{S^2}\Omega_a\,dA\right|
 \le\int_{S^2}|\Omega_a|\,dA
 \le\frac32\operatorname{Area}(S^2)=6\pi.
\end{equation}
Thus $|d_a|\le3$, and the antipodal relation gives the same estimate for
$d_{-a}$.

\smallskip
\noindent\textit{4). What equality forces.}
Assume $|d_a|=3$.  Equality holds throughout
\eqref{eq:euler-bound-chain}.  The continuous function
$3/2-|\Omega_a|$ is nonnegative and has integral zero, so
$|\Omega_a|=3/2$ at every point.  The two inequalities in
\eqref{eq:pointwise-euler-bound} now give $A(n)=3/2$, so equality holds
in the spectral-gap estimate.  More explicitly,
\[
 \begin{split}
 0={}&\frac12\sum_{\rho=1}^2
 \sum_{\nu\in\{0,-b,-a\}}
 \left(
 \frac1{(a-b)(a-\nu)}-\frac1{(a-\nu)^2}
 \right)
 \|C_\nu(u_\rho)\|_{\mathrm{HS}}^2 .
 \end{split}
\]
Observe that every coefficient is strictly positive, hence
$C_\nu(u_\rho)=0$ for $\nu=0,-b,-a$.  Since the tangent basis was
arbitrary, for every $n\in S^2$ and every $u\perp n$,
\begin{equation}\label{eq:extreme-block-vanishing}
 B_u\mathcal E_a(n)\subset \mathcal E_b(n).
\end{equation}
Equation \eqref{eq:second-spectral-variation} now reduces to
\[
 \frac{(P_b(n)B_uP_a(n))^t(P_b(n)B_uP_a(n))}{a-b}
 =\frac a2|u|^2\Id.
\]
Because $a(a-b)/2=1$,
\begin{equation}\label{eq:outer-block-isometry}
 (P_b(n)B_uP_a(n))^t(P_b(n)B_uP_a(n))=|u|^2\Id.
\end{equation}
Applying these conclusions at $-n$, and recalling that
$\mathcal E_a(-n)=\mathcal E_{-a}(n)$ and $\mathcal E_b(-n)=\mathcal E_{-b}(n)$, gives
\begin{equation}\label{eq:negative-extreme-block-vanishing}
 B_u\mathcal E_{-a}(n)\subset \mathcal E_{-b}(n),
\end{equation}
and the corresponding restriction is again a scaled isometry.

\smallskip
\noindent\textit{5). The zero eigenspace is a common kernel.}
Fix orthonormal $n,u\in\R^3$ and use the same great circle $n(t)$.
Its unit tangent vector is
\[
 m(t)=n'(t)=-\sin t\,n+\cos t\,u.
\]
By \eqref{eq:extreme-block-vanishing} and the symmetry of $B_{m(t)}$,
\[
 P_a(t)B_{m(t)}P_0(t)=0
\]
for every $t$.  Differentiating at $t=0$ gives
\[
 P_a'(0)B_uP_0(n)-P_a(n)B_nP_0(n)+P_a(n)B_uP_0'(0)=0.
\]
The middle term is zero.  Take the block from $E_0(n)$ to $E_a(n)$ and
put
\[
 U=P_a(n)B_uP_b(n),\qquad V_0=P_b(n)B_uP_0(n).
\]
Formula \eqref{eq:first-spectral-variation} and the extreme block
vanishings give
\[
 P_a(n)P_a'(0)B_uP_0(n)=\frac{UV_0}{a-b},
 \qquad
 P_a(n)B_uP_0'(0)P_0(n)=-\frac{UV_0}{b}.
\]
Consequently,
\[
 \left(\frac1{a-b}-\frac1b\right)UV_0
 =-\frac1{2b}UV_0=0.
\]
By \eqref{eq:outer-block-isometry}, $U$ is invertible for $u\ne0$.
Thus $P_b(n)B_uP_0(n)=0$.  Repeating the argument at $-n$ gives
$P_{-b}(n)B_uP_0(n)=0$.  The remaining components vanish by
\eqref{eq:diagonal-block-zero},
\eqref{eq:extreme-block-vanishing},
\eqref{eq:negative-extreme-block-vanishing}, and symmetry.  Hence
\[
 B_uE_0(n)=0\qquad(u\perp n).
\]
Together with $B_nE_0(n)=0$, linearity gives
$E_0(n)\subset\bigcap_{\xi\in\R^3}\ker B_\xi$.  The reverse inclusion
holds because every vector in the intersection lies in
$\ker B_n=E_0(n)$.  Therefore
\begin{equation}\label{eq:common-kernel}
 K:=\bigcap_{\xi\in\R^3}\ker B_\xi=E_0(n),
 \qquad \dim K=2,
\end{equation}
for every $n\in S^2$.

\smallskip
\noindent\textit{6). The three real off-diagonal blocks.}
Every $B_\xi$ preserves $K^\perp$.  Choose an orthonormal basis
$n_1,n_2,n_3$ of $\R^3$ and decompose
\[
 K^\perp=W_a\oplus W_b\oplus W_{-b}\oplus W_{-a}
\]
into the eigenspaces of $B_{n_3}$.  Thus
\[
 B_{n_3}|_{K^\perp}
 =\operatorname{diag}(aI,bI,-bI,-aI).
\]
For $u\in n_3^\perp$, the diagonal identity, the common kernel, and the
two extreme block vanishings leave only three successive blocks:
\begin{equation}\label{eq:tridiagonal-pencil}
 B_u|_{K^\perp}=
 \begin{pmatrix}
 0&U(u)&0&0\\
 U(u)^t&0&M(u)&0\\
 0&M(u)^t&0&Z(u)\\
 0&0&Z(u)^t&0
 \end{pmatrix}.
\end{equation}
Here $U(u):W_b\to W_a$, $M(u):W_{-b}\to W_b$, and
$Z(u):W_{-a}\to W_{-b}$.  The outer isometry relation and its
antipodal counterpart first give
\[
 U(u)U(u)^t=|u|^2I,\qquad Z(u)^tZ(u)=|u|^2I.
\]
These are square $2$-by-$2$ blocks.  Thus, when $u\ne0$, each block is
invertible, and the same identities also hold with the transpose on the
other side.  They are trivial for $u=0$.
Apply \eqref{eq:second-spectral-variation} at the eigenvalue $b$.  The
only two terms come from $W_a$ and $W_{-b}$:
\[
 \frac{U(u)^tU(u)}{b-a}
 +\frac{M(u)M(u)^t}{2b}
 =\frac b2|u|^2I.
\]
Since $a-b=2b$ and $b^2=1/3$, this gives
\[
 M(u)M(u)^t=\frac43|u|^2I.
\]
Therefore,
\begin{equation}\label{eq:three-block-norms}
 U(u)^tU(u)=|u|^2I,\qquad
 M(u)^tM(u)=c^2|u|^2I,\qquad
 Z(u)^tZ(u)=|u|^2I.
\end{equation}

\smallskip
\noindent\textit{7). Simultaneous normalization.}
The three blocks at $n_1$ must be normalized successively, not by three
independent choices.  Choose an orthonormal basis
$e_{a,1},e_{a,2}$ of $W_a$, and define
\[
 e_{b,j}=U(n_1)^te_{a,j},\qquad
 e_{-b,j}=\frac1cM(n_1)^te_{b,j},\qquad
 e_{-a,j}=Z(n_1)^te_{-b,j}.
\]
Equation \eqref{eq:three-block-norms} shows at each step that the new
basis is orthonormal.  In these fixed bases,
\begin{equation}\label{eq:first-tangent-normal-form}
 U(n_1)=I,\qquad M(n_1)=cI,\qquad Z(n_1)=I.
\end{equation}
All basis changes take place inside eigenspaces of $B_{n_3}$, so they
do not alter its diagonal form.

Each block depends linearly on $u$.  Polarizing
\eqref{eq:three-block-norms} for $u=xn_1+yn_2$ shows that
$U(n_2)$, $M(n_2)/c$, and $Z(n_2)$ are skew-symmetric orthogonal
$2$-by-$2$ matrices.  Hence
\begin{equation}\label{eq:three-signs}
 U(n_2)=\varepsilon_1J,\qquad
 M(n_2)=c\varepsilon_2J,\qquad
 Z(n_2)=\varepsilon_3J,
 \qquad \varepsilon_j\in\{1,-1\}.
\end{equation}

The three signs are forced to agree.  Consider
\[
 \gamma(t)=\cos t\,n_3+\sin t\,n_1.
\]
The vector $n_2$ is perpendicular to $\gamma(t)$ for every $t$.
The positive-extreme vanishing, followed by symmetry, gives the first
identity below; the negative-extreme vanishing gives the second:
\[
 P_a(\gamma(t))B_{n_2}P_{-b}(\gamma(t))=0,
 \qquad
 P_b(\gamma(t))B_{n_2}P_{-a}(\gamma(t))=0.
\]
Differentiate them at $t=0$ and use both terms in
\eqref{eq:first-spectral-variation}.  Taking the block from
$E_{-b}(n_3)$ to $E_a(n_3)$ in the first differentiated identity gives
\[
 \frac{U(n_1)M(n_2)}{a-b}
 -\frac{U(n_2)M(n_1)}{2b}=0,
\]
Taking the block from $E_{-a}(n_3)$ to $E_b(n_3)$ in the second gives
\[
 \frac{M(n_1)Z(n_2)}{2b}
 -\frac{M(n_2)Z(n_1)}{a-b}=0.
\]
Since $a-b=2b$, we obtain
\[
 U(n_1)M(n_2)=U(n_2)M(n_1),\qquad
 M(n_1)Z(n_2)=M(n_2)Z(n_1).
\]
Substitution of \eqref{eq:first-tangent-normal-form} and
\eqref{eq:three-signs} gives
\[
 \varepsilon_1=\varepsilon_2=\varepsilon_3.
\]
Replacing $n_2$ by $-n_2$ if necessary makes the common sign equal to
$1$.  Combining the matrices of $B_{n_1},B_{n_2},B_{n_3}$ now gives
\eqref{eq:canonical-extremal-pencil}.  This differentiation is
essential: the norm identities alone leave three independent signs.

Now, the proof is completed.
\end{proof}

%\begin{rem}\label{rem:middle-euler-bundles}
%The estimate is special to the extreme eigenvalues.  At the eigenvalue
%$b$, the term with denominator $b-a$ in
%\eqref{eq:second-spectral-variation} is negative, so the same argument
%does not give a positive energy bound for $|d_b|$.  In the focal
%situation of Section~\ref{sec2}, the exact value $|d_b|=1$ comes from
%the global Euler computation in
%Proposition~\ref{prop:focal-euler-numbers}.
%\end{rem}

\begin{cor}\label{cor:focal-pencil-rigidity}
After interchanging $N_+$ and $N_-$ as in
Proposition~\ref{prop:focal-euler-numbers}, the normal shape pencil of
$N_+$ at every point has the real canonical form
\eqref{eq:canonical-extremal-pencil}.  In particular, given any $p\in N_+$,
the kernel of $B_n$ is independent of $n\in \nu_p^{S^{13}}N_+$.
\end{cor}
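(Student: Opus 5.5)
The plan is to apply Theorem~\ref{thm:extremal-pencil} pointwise on $N_+$; the only work is checking that its hypotheses hold there. Fix $p\in N_+$. Take $V=T_pN_+$, which has dimension ten. Identify the three-dimensional normal space $\nu_p^{S^{13}}N_+$ with $\R^3$ by an isometry, and let $B:\R^3\to\operatorname{Sym}(V)$ be the shape operator map $\eta\mapsto B_\eta$. This map is linear in $\eta$, and each $B_\eta$ is self-adjoint.

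The first hypothesis to check is the spectrum condition: for every unit normal $\eta$, the spectrum of $B_\eta$ must be $\{\sqrt3,1/\sqrt3,0,-1/\sqrt3,-\sqrt3\}$, each with multiplicity two. This follows from the proof of Proposition~\ref{prop:focal-euler-numbers}. Every unit normal $\eta\in\Sigma_p$ has the form $\eta_i(x)$ for some $x\in F_i^{-1}(p)$, because $\eta_i$ is a diffeomorphism onto $\Sigma_p$. Formula \eqref{eq:eigenbundle-pullback} then shows that $dF_i$ maps $D_j(x)$ isomorphically onto the $\cot((j-i)\pi/6)$-eigenspace of $B_\eta$, for $j\neq i$. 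These five planes have total dimension ten and span $T_pN_+$, since $dF_i$ maps $\bigoplus_{j\ne i}D_j(x)$ onto $T_pN_+$. So the spectrum is exactly the five values $\cot(k\pi/6)$, $k=1,\dots,5$, each of multiplicity two. This is precisely the set $\Lambda$ with $a=\sqrt3$ and $b=1/\sqrt3$.

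Under this identification, the eigenbundles $\mathcal E_\lambda$ of Section~\ref{sec3} over $S^2$ coincide with the bundles $\mathcal E_\mu(p)$ over $\Sigma_p$ from Section~\ref{sec2}. Euler numbers are unchanged up to sign by the choice of isometry, and absolute values are all that matter here. Proposition~\ref{prop:focal-euler-numbers} gives $|d_a|=|\chi_{\sqrt3}(p)|=3$ at every $p\in N_+$. This is equality in Theorem~\ref{thm:extremal-pencil}(2), so part (3) of that theorem applies. It yields an orthonormal basis $n_1,n_2,n_3$ of $\nu_pN_+$ and a decomposition of $T_pN_+$ in which the pencil takes the canonical form \eqref{eq:canonical-extremal-pencil}. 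Part (3) also gives $K=\bigcap_\xi\ker B_\xi$ with $\dim K=2$, and by \eqref{eq:common-kernel} we have $\ker B_n=E_0(n)=K$ for every unit $n$. Since $\ker B_{tn}=\ker B_n$ for $t\neq0$, the kernel of $B_n$ is independent of $n$, which is the second assertion.

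The only point needing care is the direction of the identification of $\Sigma_p$ with $S^2$ and the fact that the theorem's hypothesis holds at all $n\in S^2$. The argument above covers every unit normal, since $\eta_i$ is surjective onto $\Sigma_p$. The Euler-number equality is applied to the bundles over the full sphere $\Sigma_p$, so nothing is lost.
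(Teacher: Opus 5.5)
Your proposal is correct and follows the paper's proof: the paper invokes Proposition~\ref{prop:focal-euler-numbers} to get $|\chi_{\sqrt3}(p)|=3$ at every $p\in N_+$, and then applies part (3) of Theorem~\ref{thm:extremal-pencil} pointwise. Your extra check of the spectral hypothesis, via \eqref{eq:eigenbundle-pullback} and the fact that $\eta_i$ maps $F_i^{-1}(p)$ onto $\Sigma_p$, makes explicit a step the paper leaves implicit.
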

\begin{proof}
For every $p\in N_+$, Proposition~\ref{prop:focal-euler-numbers} gives
$|n_{\sqrt3}(p)|=3$.  The conclusion follows from
Theorem~\ref{thm:extremal-pencil}.
\end{proof}
\begin{rem}
A substantial portion of Miyaoka's Annals papers \cite{Mi13, Mi16} is devoted to the proof of Corollary 3.1.
\end{rem}
%----------------------------------------------------------------------------------------------------------------
\section{Existence of Ambrose-Singer connection and a Proof of Theorem 1.1}
\subsection{The Codazzi cubic of the hypersurface}
As before, let $M$ be an isoparamtric hypersurface in $S^{13}$ with $(g, m)=(6, 2)$, $\xi$ be a
globally defined unit normal in the sphere, $A$ be the shape operator with respect to $\xi$, and $\partial$ be
the flat connection of $\R^{14}$. We use the shape-operator convention
\[
 \partial_X\xi=-AX.
\]
Write
\begin{equation}\label{ck:eq-principal}
 TM=D_1\oplus\cdots\oplus D_6,
 \qquad A|_{D_i}=\lambda_i\Id,
 \qquad \operatorname{rank}D_i=2.
\end{equation}

Let $\langle\,,\,\rangle$ be the induced metric of $M$ in $S^{13}$ and set
\begin{equation}\label{ck:eq-cubic}
 h(Y,Z)=\langle AY,Z\rangle,
 \qquad
 \alpha(X,Y,Z)=(\nabla_Xh)(Y,Z)
             =\langle(\nabla_XA)Y,Z\rangle.
\end{equation}
It follows from the Codazzi equation
that $\alpha$ is a completely symmetric covariant cubic tensor. The following elementary lemma is
well known and we include a proof for completeness.

\begin{lem}\label{ck:lem-cubic}
For sections $Y$ of $D_j$ and $Z$ of $D_k$, and an arbitrary tangent
vector field $X$ on $M$, one has
\begin{equation}\label{ck:eq-cubic-components}
 \alpha(X,Y,Z)=(\lambda_j-\lambda_k)
                    \langle\nabla_XY,Z\rangle.
\end{equation}
In particular, $\alpha$ vanishes whenever two of its arguments belong
to the same principal distribution.
\end{lem}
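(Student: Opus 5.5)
The plan is to differentiate the eigenvalue relation $AY=\lambda_jY$ covariantly, using that the principal curvatures are constant, and then pair the result with $Z$.

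\emph{Derivative of the eigen-relation.} For a section $Y$ of $D_j$ we have $AY=\lambda_jY$ with $\lambda_j$ constant on $M$, since $M$ is isoparametric. Differentiating along an arbitrary tangent field $X$ with the Levi-Civita connection $\nabla$ of $M$ gives
\[
 (\nabla_XA)Y + A\nabla_XY=\lambda_j\nabla_XY,
 \qquad\text{i.e.}\qquad
 (\nabla_XA)Y=(\lambda_j\Id-A)\nabla_XY .
\]

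\emph{Pairing with $Z$.} Take the inner product with a section $Z$ of $D_k$. Because $A$ is self-adjoint and $AZ=\lambda_kZ$, we get $\langle A\nabla_XY,Z\rangle=\langle\nabla_XY,AZ\rangle=\lambda_k\langle\nabla_XY,Z\rangle$. Substituting into the previous identity yields
\[
 \alpha(X,Y,Z)=\langle(\nabla_XA)Y,Z\rangle=(\lambda_j-\lambda_k)\langle\nabla_XY,Z\rangle,
\]
which is \eqref{ck:eq-cubic-components}.

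\emph{The vanishing statement.} If $Y$ and $Z$ lie in the same distribution, so $j=k$, the factor $\lambda_j-\lambda_k$ is zero and $\alpha(X,Y,Z)=0$ for every $X$. Now suppose the repeated distribution occurs among some other pair of arguments, say $X$ and $Y$ both in $D_j$. By the Codazzi equation $\alpha$ is completely symmetric, so we can permute the arguments to put those two in the second and third slots, and then apply the case just treated. This gives vanishing whenever two of the three arguments belong to the same principal distribution.

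The only technical point is that $\alpha$ is tensorial, so its value at a point depends only on the vectors there. Hence it is enough to prove the identity for local sections of $D_j$ and $D_k$ extending given vectors. Such extensions exist because each $D_i$ is a smooth distribution of constant rank, by constancy of the principal curvatures and their multiplicities.
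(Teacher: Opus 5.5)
Your proposal is correct and follows the paper's proof: you differentiate $AY=\lambda_jY$ using constancy of $\lambda_j$, pair with $Z\in D_k$ using self-adjointness of $A$, and get the vanishing from $j=k$ together with the complete symmetry of $\alpha$ given by the Codazzi equation. Your added remark on tensoriality, and on extending pointwise vectors to local sections of the smooth distributions $D_i$, is a harmless clarification that the paper leaves implicit.
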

\begin{proof}
Since $\lambda_j$ is constant,
\[
 (\nabla_XA)Y
 =\nabla_X(AY)-A\nabla_XY
 =(\lambda_j\Id-A)\nabla_XY.
\]
Taking the inner product with $Z$ gives
\eqref{ck:eq-cubic-components}.  If $j=k$, the right-hand side
vanishes.  Complete symmetry of $\alpha$ proves the last assertion.
\end{proof}

\subsection{The common kernel and opposite principal distributions}
For $1\le i\le6$, define the focal map and its associated unit normal by
\begin{equation}\label{ck:eq-focal-maps}
 \begin{split}
 F_i(x)&=\cos\theta_i\,x+\sin\theta_i\,\xi(x),\\
 \eta_i(x)&=-\sin\theta_i\,x+\cos\theta_i\,\xi(x).
 \end{split}
\end{equation}
The images $N_i=F_i(M)$ alternate between the two focal submanifolds.
For completeness, if $\varphi$ is the restriction of a compatibly
normalized Cartan--M\"unzner polynomial, its normal-circle formula is
\cite{Mu80}
\begin{equation}\label{ck:eq-normal-circle}
 \varphi(\cos t\,x+\sin t\,\xi(x))
 =\cos\bigl(6(\theta_1-t)\bigr).
\end{equation}
Thus
\[
 \varphi(F_i(x))=(-1)^{i-1}.
\]
Let $N$ be the focal submanifold $N_+$ in Corollary
\ref{cor:focal-pencil-rigidity}. The index set
\[
 \mathcal I=\{i\in\{1,\ldots,6\}:N_i=N\}
\]
is therefore either $\{1,3,5\}$ or $\{2,4,6\}$.
Every opposite pair $\{a,a+3\}$, $1\le a\le3$, contains exactly one
element of $\mathcal I$.  This observation avoids imposing the
common-kernel condition on both focal submanifolds.

\begin{lem}\label{ck:lem-opposite}
Assume for any given $p\in N$, for any $n\in \nu_p^{S^{13}}N$, the kernel of the shape operator $B_n$ of $N$ is independent of $n$. Then
\begin{equation}\label{ck:eq-opposite}
 \alpha(D_a,D_{a+3},TM)=0,
 \qquad a=1,2,3.
\end{equation}
\end{lem}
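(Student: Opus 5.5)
The plan is to translate the hypothesis ``$\ker B_n$ is independent of $n$'' into the statement that the distribution $D_{i+3}$, viewed as a family of $2$-planes in $\R^{14}$, is constant along each leaf of $D_i$ whenever $i\in\mathcal I$. Then Lemma~\ref{ck:lem-cubic} kills the relevant components of $\alpha$. Since every opposite pair $\{a,a+3\}$ contains exactly one element of $\mathcal I$, and $\alpha$ is completely symmetric, it suffices to prove
\[
 \alpha(D_i,D_{i+3},TM)=0\qquad\text{for every } i\in\mathcal I .
\]

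\emph{Step 1: identify the kernel.} Fix $i\in\mathcal I$, $p\in N$ and $x\in F_i^{-1}(p)$. By \eqref{eq:differential-focal-map}, for $X\in D_j$ with $j\ne i$ we have $dF_i(X)=\frac{\sin(\theta_j-\theta_i)}{\sin\theta_j}X$, a nonzero multiple of the same ambient vector $X$. By \eqref{eq:focal-weingarten} this vector is an eigenvector of $B_{\eta_i(x)}$ with eigenvalue $\cot(\theta_j-\theta_i)$. This eigenvalue vanishes exactly when $j=i+3$. Both spaces have dimension two, so as linear subspaces of $\R^{14}$
\[
 D_{i+3}(x)=\ker B_{\eta_i(x)}\subset T_pN .
\]

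\emph{Step 2: constancy along leaves.} As $x$ runs over the leaf $F_i^{-1}(p)$ of $D_i$, the normal $\eta_i(x)$ runs over all of $\Sigma_p$. By hypothesis, $\ker B_{\eta_i(x)}=K_p$ is independent of $x$. Hence $x\mapsto D_{i+3}(x)$ is the constant plane $K_p$ along each $D_i$-leaf.

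Now take a vector field $X$ tangent to $D_i$ and a section $Y$ of $D_{i+3}$. Restricted to the leaf, $Y$ is a curve in the fixed plane $K_p$, so the flat derivative $\partial_XY$ lies in $K_p=D_{i+3}(x)$. The Levi-Civita derivative $\nabla_XY$ is the $TM$-component of $\partial_XY$. Since $D_{i+3}(x)\subset T_xM$, this gives $\nabla_XY\in D_{i+3}$.

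\emph{Step 3: conclude.} Let $Z$ be a section of $D_k$. If $k\ne i+3$, then $\langle\nabla_XY,Z\rangle=0$, so $\alpha(X,Y,Z)=0$ by \eqref{ck:eq-cubic-components}. If $k=i+3$, two arguments lie in the same distribution, and $\alpha$ vanishes by the last assertion of Lemma~\ref{ck:lem-cubic}. Summing over $k$ gives $\alpha(D_i,D_{i+3},TM)=0$. Symmetry of $\alpha$ then gives the statement for each pair $\{a,a+3\}$.

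The only delicate point is Step 1: we must check that the identification $dF_i(D_{i+3})=\ker B_{\eta_i(x)}$ is an equality of ambient subspaces, not merely an isomorphism of bundles over $\Sigma_p$. This holds because $dF_i$ acts on $D_j$ as a nonzero scalar multiple of the identity on ambient vectors. The remaining steps are routine.
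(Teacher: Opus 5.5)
Your proposal is correct and follows the paper's proof essentially step for step. You identify $\ker B_{\eta_i(x)}=D_{i+3}(x)$ as ambient subspaces via the focal Weingarten formula, use the hypothesis to make $D_{i+3}$ the fixed plane $K_q$ along each fiber $F_i^{-1}(q)$, get $\nabla_XY\in D_{i+3}$ from the Gauss formula, and finish with Lemma~\ref{ck:lem-cubic}, symmetry of $\alpha$, and the fact that each opposite pair meets $\mathcal I$ exactly once (your remark that $\eta_i$ sweeps out all of $\Sigma_p$ is true but unnecessary, since the hypothesis already gives $\ker B_n=K_q$ for every $n$).
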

\begin{proof}
Fix $i\in\mathcal I$, $x\in M$, and $q=F_i(x)$.  For
$X\in D_j(x)$, differentiating \eqref{ck:eq-focal-maps} gives
\begin{align}
 dF_i(X)
 &=\bigl(\cos\theta_i-\lambda_j\sin\theta_i\bigr)X
   =\frac{\sin(\theta_j-\theta_i)}{\sin\theta_j}X,
   \label{ck:eq-df}\\
 d\eta_i(X)
 &=-\bigl(\sin\theta_i+\lambda_j\cos\theta_i\bigr)X
   =-\frac{\cos(\theta_j-\theta_i)}{\sin\theta_j}X.
   \label{ck:eq-deta}
\end{align}
The coefficient in \eqref{ck:eq-df} is zero precisely when $j=i$.
Hence $\ker dF_i=D_i$ and $\operatorname{rank}dF_i=10$.
Identifying tangent vectors with their ambient vectors in $\R^{14}$,
we obtain the orthogonal direct sum
\begin{equation}\label{ck:eq-focal-tangent}
 T_qN=\bigoplus_{j\ne i}D_j(x).
\end{equation}
The vector $\eta_i(x)$ is perpendicular to both $q$ and the right-hand
side of \eqref{ck:eq-focal-tangent}; it is therefore a unit vector in
$\nu_q^{S^{13}}N$, a three-dimensional normal space.

When $j\ne i$, the vector in \eqref{ck:eq-deta} belongs to $T_qN$.
The Weingarten formula along the map $F_i$ consequently yields
\begin{equation}\label{ck:eq-focal-shape}
 B_{\eta_i(x)}dF_i(X)
 =-d\eta_i(X)
 =\cot(\theta_j-\theta_i)\,dF_i(X).
\end{equation}
This formula uses the tangential component of the derivative of the
normal field along a curve; it does not require $\eta_i$ to descend
to a single-valued normal field on $N$.
It follows that the five eigenvalues of $B_{\eta_i(x)}$ are
\[
 \sqrt3,\quad\frac1{\sqrt3},\quad0,
 \quad-\frac1{\sqrt3},\quad-\sqrt3,
\]
each with multiplicity two.  The zero eigenvalue corresponds to
$j\equiv i+3\pmod6$.  Thus
\begin{equation}\label{ck:eq-focal-kernel}
 \ker B_{\eta_i(x)}=D_{i+3}(x).
\end{equation}
This is an equality of actual subspaces of $\R^{14}$.

Now keep $q$ fixed and vary $x$ along a fiber of $F_i$.
The fact that for any $n\in \nu_q^{S^{13}}N$, the kernel of the shape operator $B_n$ of $N$ is independent of $n$, and equation \eqref{ck:eq-focal-kernel} imply
\begin{equation}\label{ck:eq-kernel-along-fiber}
 D_{i+3}(x)=K_q
 \qquad\text{for all }x\in F_i^{-1}(q).
\end{equation}
Let $X$ be a local section of $D_i$ and $Y$ a local section of
$D_{i+3}$.  Along each integral curve of $X$, the map $F_i$ is constant.
By \eqref{ck:eq-kernel-along-fiber}, $Y$ takes values in the fixed
ambient subspace $K_q$, and therefore
\[
 \partial_XY\in K_q=D_{i+3}.
\]
On the other hand, the Euclidean Gauss formula is
\begin{equation}\label{ck:eq-euclidean-gauss}
 \partial_XY=\nabla_XY+h(X,Y)\xi-\langle X,Y\rangle x.
\end{equation}
The last two terms vanish because $X$ and $Y$ belong to distinct
principal distributions.  Consequently,
\[
 \nabla_XY\in D_{i+3}.
\]
Equation \eqref{ck:eq-cubic-components} now gives
$\alpha(D_i,D_{i+3},TM)=0$.
As $i$ runs through $\mathcal I$, all three opposite pairs occur.
The symmetry of $\alpha$ proves \eqref{ck:eq-opposite}.
\end{proof}

\begin{rem}
See also Theorem 4.7 in \cite{Si17} for an alternative proof of Lemma \ref{ck:lem-opposite}.
\end{rem}

\subsection{Construction of a metric connection}
Set
\begin{equation}\label{ck:eq-three-bundles}
 \mathcal H_1=D_1\oplus D_4,
 \qquad
 \mathcal H_2=D_2\oplus D_5,
 \qquad
 \mathcal H_3=D_3\oplus D_6.
\end{equation}
These are mutually orthogonal smooth subbundles of rank four.
By Lemmas~\ref{ck:lem-cubic} and~\ref{ck:lem-opposite},
\begin{equation}\label{ck:eq-three-vanishing}
 \alpha(U,V,W)=0
 \quad\text{if two arguments belong to the same }\mathcal H_a.
\end{equation}
Thus the only potentially nonzero components of $\alpha$ have one
argument in each of the three bundles.

Let $\Pi_a$ be the orthogonal projection onto $\mathcal H_a$ and define
\begin{equation}\label{ck:eq-connection}
 \nabla^c_XY=\sum_{a=1}^{3}\Pi_a\nabla_X(\Pi_aY),
 \qquad
 \Gamma_XY=\nabla_XY-\nabla^c_XY.
\end{equation}
It is clear that $\nabla^c$ is a globally defined smooth
connection, and $\Gamma$ is a tensor of type $(1,2)$.

\begin{prop}\label{ck:prop-connection}
The connection $\nabla^c$ is metric and preserves each $D_i$.
Moreover,
\begin{equation}\label{ck:eq-parallel-shape}
 \nabla^c A=0,
 \qquad \nabla^c h=0.
\end{equation}
The difference tensor satisfies
\begin{align}
 \Gamma_{\mathcal H_a}\mathcal H_a&=0,
                    \label{ck:eq-gamma-same}\\
 \Gamma_{\mathcal H_a}\mathcal H_b&\subset\mathcal H_c
 \quad\text{if }\{a,b,c\}=\{1,2,3\}.
                    \label{ck:eq-gamma-mixed}
\end{align}
For $Y\in D_j$ and $Z\in D_k$, its components are
\begin{equation}\label{ck:eq-gamma-components}
 \langle\Gamma_XY,Z\rangle=
 \begin{cases}
 \displaystyle\frac{\alpha(X,Y,Z)}{\lambda_j-\lambda_k},&j\ne k,\\[6pt]
 0,&j=k.
 \end{cases}
\end{equation}
\end{prop}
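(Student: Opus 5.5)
The plan is to read everything off the definition \eqref{ck:eq-connection} together with Lemma~\ref{ck:lem-cubic} and the vanishing \eqref{ck:eq-three-vanishing}. First, $\nabla^c$ is metric: each $\Pi_a$ is an orthogonal projection and $\nabla$ is metric. For $Y,Z$ we write
\[
X\langle Y,Z\rangle=\sum_a X\langle\Pi_aY,\Pi_aZ\rangle=\sum_a\bigl(\langle\nabla_X\Pi_aY,\Pi_aZ\rangle+\langle\Pi_aY,\nabla_X\Pi_aZ\rangle\bigr).
\]
Inserting $\Pi_a$ in front of $\nabla_X\Pi_aY$ does not change the pairing with $\Pi_aZ$, which gives $\langle\nabla^c_XY,Z\rangle+\langle Y,\nabla^c_XZ\rangle$. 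Next I compute the components of $\Gamma$. Take $Y\in D_j$ with $D_j\subset\mathcal H_a$ and $Z\in D_k$. If $D_k\not\subset\mathcal H_a$, then $\langle\nabla^c_XY,Z\rangle=\langle\Pi_a\nabla_XY,Z\rangle=0$, so $\langle\Gamma_XY,Z\rangle=\langle\nabla_XY,Z\rangle$. If $D_k\subset\mathcal H_a$, then $\langle\Gamma_XY,Z\rangle=0$.

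Now compare with \eqref{ck:eq-gamma-components}. For $j\ne k$, Lemma~\ref{ck:lem-cubic} gives $\langle\nabla_XY,Z\rangle=\alpha(X,Y,Z)/(\lambda_j-\lambda_k)$. When $D_j,D_k$ lie in the same $\mathcal H_a$, the right side vanishes by \eqref{ck:eq-three-vanishing}, because $Y$ and $Z$ both lie in $\mathcal H_a$. This matches $\langle\Gamma_XY,Z\rangle=0$. For $j=k$ both sides are zero. So \eqref{ck:eq-gamma-components} holds in all cases.

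The structural statements then follow from \eqref{ck:eq-three-vanishing} applied to $\alpha(X,Y,Z)$:
\begin{itemize}
\item If $X,Y\in\mathcal H_a$, every component of $\Gamma_XY$ vanishes, which is \eqref{ck:eq-gamma-same}.
\item If $X\in\mathcal H_a$ and $Y\in\mathcal H_b$ with $a\ne b$, the components with $Z\in\mathcal H_a$ or $Z\in\mathcal H_b$ vanish. Hence $\Gamma_XY\in\mathcal H_c$, which is \eqref{ck:eq-gamma-mixed}.
\end{itemize}

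For the preservation of $D_i$, take $Y\in D_j$. The projection $\nabla^c_XY$ onto $D_k$ with $k\ne j$ equals $\langle\nabla_XY,Z\rangle-\langle\Gamma_XY,Z\rangle=0$ by the formula just proved. So $\nabla^c_XY\in D_j$. Since each $D_j$ is $\nabla^c$-parallel and $A=\lambda_j$ on $D_j$ with constant $\lambda_j$, we get $(\nabla^c_XA)Y=\nabla^c_X(\lambda_jY)-A\nabla^c_XY=0$. Because $\nabla^c$ is metric, $\nabla^ch=0$ follows from $h=\langle A\cdot,\cdot\rangle$.

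The only step needing care is the same-$\mathcal H_a$ case of \eqref{ck:eq-gamma-components}. There the identity is not a tautology: it relies on Lemma~\ref{ck:lem-opposite} through \eqref{ck:eq-three-vanishing} to force $\langle\nabla_XY,Z\rangle=0$ for $Y\in D_a$, $Z\in D_{a+3}$. I expect this to be the main point to check; everything else is bookkeeping with projections.
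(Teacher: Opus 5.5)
Your proof is correct and uses the same ingredients as the paper. These are the projection computation for metricity, Lemma~\ref{ck:lem-cubic}, and the opposite-pair vanishing \eqref{ck:eq-three-vanishing}, which excludes $D_j$--$D_{j+3}$ mixing inside each $\mathcal H_a$. The only difference is the order: the paper first shows that $\nabla^c$ preserves each $D_i$ and then reads off the components of $\Gamma$, whereas you compute the components of $\Gamma$ directly from the projections and deduce the preservation of $D_i$ afterwards, which is the same argument rearranged.
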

\begin{proof}
Write $Y_a=\Pi_aY$ and $Z_a=\Pi_aZ$.  Orthogonality and metric
compatibility of $\nabla$ give
\begin{align*}
 X\langle Y,Z\rangle
 &=\sum_a\bigl(\langle\nabla_XY_a,Z_a\rangle
                  +\langle Y_a,\nabla_XZ_a\rangle\bigr)\\
 &=\langle\nabla^c_XY,Z\rangle
                  +\langle Y,\nabla^c_XZ\rangle.
\end{align*}
Hence $\nabla^c\langle\,,\,\rangle=0$.
By construction, $\nabla^c$ preserves each $\mathcal H_a$.

If $Y\in D_i$ and $Z\in D_{i+3}$, then
\eqref{ck:eq-cubic-components} and \eqref{ck:eq-opposite} yield
\[
 (\lambda_i-\lambda_{i+3})\langle\nabla_XY,Z\rangle=0.
\]
The eigenvalues are distinct, so there is no mixing between the two
principal distributions in a given $\mathcal H_a$.  It follows that
$\nabla^c_XY\in D_i$ whenever $Y\in D_i$.
Since the $\lambda_i$ are constant, $\nabla^c A=0$. As $\nabla^c$ is metric, it follows that $\nabla^c h=0$.

For $Y\in D_j$, $Z\in D_k$ and $j\ne k$, since $\nabla^c$ preserves $D_j$, it implies that
$\langle\nabla^c_XY,Z\rangle=0$.  Equation
\eqref{ck:eq-cubic-components} proves the first case of
\eqref{ck:eq-gamma-components}.  For $j=k$, projection onto the
bundle containing $D_j$ leaves the inner product with $Z$ unchanged;
thus $\langle\Gamma_XY,Z\rangle=0$.

Suppose next that $X,Y\in\mathcal H_a$.  Decompose $Y$ into its two
principal components. For any principal component of a vector
orthogonal to $\mathcal H_a$, equations
\eqref{ck:eq-three-vanishing} and \eqref{ck:eq-cubic-components}
show that its inner product with $\nabla_XY$ is zero.
Hence $\nabla_XY\in\mathcal H_a$, and
\eqref{ck:eq-connection} gives $\Gamma_XY=0$.
This proves \eqref{ck:eq-gamma-same}.

Finally, let $X\in\mathcal H_a$, $Y\in\mathcal H_b$, and $a\ne b$.
The same component calculation, now testing against
$Z\in\mathcal H_a$, eliminates the $\mathcal H_a$-component of
$\nabla_XY$, since the first and third arguments of $\alpha(X,Y,Z)$
lie in $\mathcal H_a$.  The $\mathcal H_b$-component is precisely
$\nabla^c_XY$.  Their difference therefore lies in the remaining
bundle $\mathcal H_c$, proving \eqref{ck:eq-gamma-mixed}.
\end{proof}

\subsection{Parallelism of the cubic and the difference tensor}
\begin{prop}\label{ck:prop-parallelism}
\begin{equation}\label{ck:eq-parallel-cubic-gamma}
 \nabla^c\alpha=0,
 \qquad \nabla^c\Gamma=0.
\end{equation}
\end{prop}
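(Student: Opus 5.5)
The plan is to prove $\nabla^c\alpha=0$ first, and then obtain $\nabla^c\Gamma=0$ as a formal consequence. The key idea is to compare $\nabla^c\alpha$ with the Levi-Civita derivative $\nabla\alpha=\nabla^2h$, whose antisymmetric part is controlled by the Gauss equation. The point is that the "type" restrictions \eqref{ck:eq-three-vanishing}, \eqref{ck:eq-gamma-same} and \eqref{ck:eq-gamma-mixed} kill every correction term, and then a pigeonhole argument on the three bundles $\mathcal H_1,\mathcal H_2,\mathcal H_3$ forces $\nabla^c\alpha=0$.

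\emph{Step 1: type of $\nabla^c\alpha$.} By Proposition~\ref{ck:prop-connection}, $\nabla^c$ is metric and preserves every $D_i$, hence every $\mathcal H_a$. Covariant differentiation by $\nabla^c$ therefore preserves the vanishing pattern \eqref{ck:eq-three-vanishing}. Consequently $(\nabla^c_W\alpha)(X,Y,Z)=0$ whenever two of $X,Y,Z$ lie in the same $\mathcal H_a$. This expression is also symmetric in $X,Y,Z$.

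\emph{Step 2: symmetry in the first two slots.} Write $\nabla_W=\nabla^c_W+\Gamma_W$. This gives
\[
(\nabla_W\alpha)(X,Y,Z)=(\nabla^c_W\alpha)(X,Y,Z)-\alpha(\Gamma_WX,Y,Z)-\alpha(X,\Gamma_WY,Z)-\alpha(X,Y,\Gamma_WZ).
\]
Since $\alpha=\nabla h$, the Ricci identity together with $A|_{D_i}=\lambda_i\Id$ gives
\[
(\nabla_W\alpha)(X,Y,Z)-(\nabla_X\alpha)(W,Y,Z)=(\lambda_k-\lambda_j)\langle R(W,X)Y,Z\rangle
\]
for $Y\in D_j$ and $Z\in D_k$. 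It suffices to treat arguments in single principal distributions, and by Step 1 the only nontrivial configuration is, up to relabeling, $W,X\in\mathcal H_1$, $Y\in\mathcal H_2$, $Z\in\mathcal H_3$.
\begin{itemize}
\item[(i)] The Gauss equation expresses $R(W,X)Y$ through $\langle X,Y\rangle W$, $\langle W,Y\rangle X$, $\langle AX,Y\rangle AW$ and $\langle AW,Y\rangle AX$. All of these coefficients vanish, because $Y$ is orthogonal to $\mathcal H_1$ and $A$ preserves $\mathcal H_1$. So the curvature term is zero.
\item[(ii)] By \eqref{ck:eq-gamma-same}, $\Gamma_WX=\Gamma_XW=0$. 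By \eqref{ck:eq-gamma-mixed}, $\Gamma_WY\in\mathcal H_3$ and $\Gamma_WZ\in\mathcal H_2$. Hence $\alpha(X,\Gamma_WY,Z)$ has two arguments in $\mathcal H_3$, and $\alpha(X,Y,\Gamma_WZ)$ has two arguments in $\mathcal H_2$. Both vanish by \eqref{ck:eq-three-vanishing}, and the same holds with $W$ and $X$ interchanged.
\end{itemize}
Therefore $(\nabla^c_W\alpha)(X,Y,Z)=(\nabla^c_X\alpha)(W,Y,Z)$ in this configuration. When no two of $X,Y,Z$ share a bundle, both sides are zero anyway by Step 1, because the right-hand side then has two of its last three arguments in a common bundle. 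Thus $\nabla^c\alpha$ is totally symmetric in all four arguments.

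\emph{Step 3: conclusion.} Among any four arguments, two lie in the same $\mathcal H_a$ by pigeonhole. Using total symmetry, move those two into the last three slots; Step 1 then gives zero. Hence $\nabla^c\alpha=0$. For $\Gamma$, formula \eqref{ck:eq-gamma-components} writes $\langle\Gamma_XY,Z\rangle$ as $\alpha(X,Y,Z)$ multiplied by the constant $(\lambda_j-\lambda_k)^{-1}$ on $D_j\times D_k$. Since $\nabla^c$ is metric and preserves each $D_j$, differentiating this identity gives $\langle(\nabla^c_W\Gamma)_XY,Z\rangle=(\nabla^c_W\alpha)(X,Y,Z)/(\lambda_j-\lambda_k)=0$, so $\nabla^c\Gamma=0$.

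I expect the main obstacle to be Step 2. One must get the sign conventions of the Ricci identity right, and must check that every Gauss-equation term and every quadratic $\Gamma\alpha$ term really falls into a forbidden type. Special care is needed when $W$ and $X$ lie in different principal distributions $D_i$ and $D_{i+3}$ inside the same $\mathcal H_1$.
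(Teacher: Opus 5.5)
Your Step 2 computations are correct, but they are carried out in the one configuration that gives no information. The configuration that carries the whole content of the proposition is then dismissed by a false claim.

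Take $X\in\mathcal H_1$, $Y,W\in\mathcal H_2$, $Z\in\mathcal H_3$. Then $(\nabla^c_X\alpha)(W,Y,Z)=0$ by Step 1. But $(\nabla^c_W\alpha)(X,Y,Z)$ has its three cubic arguments in three different bundles, so Step 1 says nothing about it. Your sentence ``both sides are zero anyway by Step 1'' is therefore wrong for this side.

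After Step 1, components of exactly this form are the only ones left to kill: cubic arguments one in each $\mathcal H_a$, derivative direction in the bundle of one of them. This is the paper's reduction \eqref{ck:eq-reduction}, so the case you called trivial is the whole proposition. Conversely, in your main configuration $W,X\in\mathcal H_1$, both $(\nabla^c_W\alpha)(X,Y,Z)$ and $(\nabla^c_X\alpha)(W,Y,Z)$ are of that same unknown type, so equating them yields no vanishing. Hence total symmetry of $\nabla^c\alpha$ is not established, and neither is the pigeonhole conclusion of Step 3 that depends on it.

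The repair uses your own ingredients, but the Ricci identity must swap the derivative direction with a cubic argument from a \emph{different} bundle. Stay in the configuration above.
\begin{itemize}
\item The Gauss equation gives $R(W,X)Y=-\langle W,Y\rangle X-h(W,Y)AX\in\mathcal H_1$, which is orthogonal to $Z$. It also gives $R(W,X)Z=0$. So the curvature term vanishes.
\item Since $\Gamma_WY=0$, $\Gamma_WX\in\mathcal H_3$ and $\Gamma_WZ\in\mathcal H_1$, every correction term in $(\nabla_W\alpha)(X,Y,Z)$ has two arguments in one bundle. Hence $(\nabla_W\alpha)(X,Y,Z)=(\nabla^c_W\alpha)(X,Y,Z)$.
\item Since $\Gamma_XW,\Gamma_XY\in\mathcal H_3$, $\Gamma_XZ\in\mathcal H_2$, and $(\nabla^c_X\alpha)(W,Y,Z)=0$ by Step 1, you get $(\nabla_X\alpha)(W,Y,Z)=0$.
\end{itemize}
Therefore $(\nabla^c_W\alpha)(X,Y,Z)=0$ directly. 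This is precisely the paper's argument \eqref{ck:eq-ricci-zero}--\eqref{ck:eq-second-ricci-term}, and no total-symmetry or pigeonhole step is needed afterwards. Your derivation of $\nabla^c\Gamma=0$ from \eqref{ck:eq-gamma-components} is fine and agrees with the paper.
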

\begin{proof}
Since $\nabla^c$ preserves each $\mathcal H_a$, differentiating
\eqref{ck:eq-three-vanishing} with this connection gives
\begin{equation}\label{ck:eq-differentiated-vanishing}
 (\nabla^c_X\alpha)(U,V,W)=0
\end{equation}
whenever two of $U,V,W$ lie in the same $\mathcal H_a$.
It remains to consider one argument in each bundle.  The tensor
$\nabla^c_X\alpha$ is symmetric in its three cubic arguments.
Decomposing the derivative argument among the three bundles, and
then using this symmetry, reduces the proof to
\begin{equation}\label{ck:eq-reduction}
 (\nabla^c_X\alpha)(Z,Y,W)=0,
 \qquad
 X,Y\in\mathcal H_a,\quad Z\in\mathcal H_b,\quad W\in\mathcal H_c,
\end{equation}
where $\{a,b,c\}=\{1,2,3\}$.

We use the curvature convention
\[
 R(X,Z)=\nabla_X\nabla_Z-\nabla_Z\nabla_X-\nabla_{[X,Z]}.
\]
Because $\alpha=\nabla h$, the Ricci identity reads
\begin{equation}\label{ck:eq-ricci}
 \begin{split}
 (\nabla_X\alpha)(Z,Y,W)-(\nabla_Z\alpha)(X,Y,W)
 &=-h(R(X,Z)Y,W)\\
 &\phantom{={}}-h(Y,R(X,Z)W).
 \end{split}
\end{equation}
The Gauss equation of $M\subset S^{13}$ is
\begin{equation}\label{ck:eq-gauss}
 \begin{split}
 R(X,Z)Y={}&\langle Z,Y\rangle X-\langle X,Y\rangle Z\\
           &+h(Z,Y)AX-h(X,Y)AZ.
 \end{split}
\end{equation}
For the vectors in \eqref{ck:eq-reduction}, orthogonality and
$A$-invariance of the $\mathcal H_a$ imply
\[
 R(X,Z)Y=-\langle X,Y\rangle Z-h(X,Y)AZ\in\mathcal H_b,
 \qquad R(X,Z)W=0.
\]
The right-hand side of \eqref{ck:eq-ricci} therefore vanishes, and
\begin{equation}\label{ck:eq-ricci-zero}
 (\nabla_X\alpha)(Z,Y,W)=(\nabla_Z\alpha)(X,Y,W).
\end{equation}

For any covariant cubic tensor, the comparison of $\nabla$ and
$\nabla^c$ gives
\begin{equation}\label{ck:eq-cubic-comparison}
 \begin{split}
 (\nabla_X\alpha)(Z,Y,W)
 ={}&(\nabla^c_X\alpha)(Z,Y,W)
       -\alpha(\Gamma_XZ,Y,W)\\
    &-\alpha(Z,\Gamma_XY,W)
       -\alpha(Z,Y,\Gamma_XW).
 \end{split}
\end{equation}
By \eqref{ck:eq-gamma-same}--\eqref{ck:eq-gamma-mixed},
\[
 \Gamma_XY=0,
 \qquad \Gamma_XZ\in\mathcal H_c,
 \qquad \Gamma_XW\in\mathcal H_b.
\]
Each correction term in \eqref{ck:eq-cubic-comparison} vanishes:
the first has two arguments in $\mathcal H_c$, the middle term is
zero, and the last has two arguments in $\mathcal H_b$.
Consequently,
\begin{equation}\label{ck:eq-first-ricci-term}
 (\nabla_X\alpha)(Z,Y,W)=(\nabla^c_X\alpha)(Z,Y,W).
\end{equation}

For the other side of \eqref{ck:eq-ricci-zero}, equation
\eqref{ck:eq-differentiated-vanishing} gives
\[
 (\nabla^c_Z\alpha)(X,Y,W)=0.
\]
Moreover,
\[
 \Gamma_ZX,\Gamma_ZY\in\mathcal H_c,
 \qquad \Gamma_ZW\in\mathcal H_a.
\]
Thus
\[
 \alpha(\Gamma_ZX,Y,W)
 =\alpha(X,\Gamma_ZY,W)
 =\alpha(X,Y,\Gamma_ZW)=0,
\]
by \eqref{ck:eq-three-vanishing}.  Applying the same
connection-comparison formula yields
\begin{equation}\label{ck:eq-second-ricci-term}
 (\nabla_Z\alpha)(X,Y,W)=0.
\end{equation}
Combining \eqref{ck:eq-ricci-zero},
\eqref{ck:eq-first-ricci-term}, and
\eqref{ck:eq-second-ricci-term} proves \eqref{ck:eq-reduction}.
This establishes $\nabla^c\alpha=0$.

It remains to pass from the cubic to the difference tensor.
The metric and each $D_j$ are $\nabla^c$-parallel, and all
$\lambda_j$ are constant.  Differentiating
\eqref{ck:eq-gamma-components}, for $Y\in D_j$, $Z\in D_k$, and
$j\ne k$, gives
\[
 \bigl\langle(\nabla^c_U\Gamma)(X,Y),Z\bigr\rangle
 =\frac{(\nabla^c_U\alpha)(X,Y,Z)}{\lambda_j-\lambda_k}=0.
\]
When $j=k$, the corresponding component is zero by differentiating
the second case of \eqref{ck:eq-gamma-components}.
These are all components of $\nabla^c\Gamma$, so
$\nabla^c\Gamma=0$.
\end{proof}

\subsection{Parallel torsion and parallel curvature}
Recall that by definition a metric connection with
parallel-torsion and parallel-curvature conditions is said to be an \emph{Ambrose--Singer connection}.
\begin{cor}\label{ck:cor-as}
The torsion $T^c$
and curvature $R^c$ of $\nabla^c$ satisfy
\[
 \nabla^cT^c=0,
 \qquad\nabla^cR^c=0.
\]
Thus $\nabla^c$ is an Ambrose--Singer connection.
\end{cor}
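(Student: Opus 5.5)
Since $\nabla^c=\nabla-\Gamma$, with $\nabla$ the Levi-Civita connection of $M$, I would express the torsion and curvature of $\nabla^c$ in terms of $\Gamma$ and the Levi-Civita curvature $R$. I would then use three facts already established: $\nabla^c\Gamma=0$ from Proposition~\ref{ck:prop-parallelism}, $\nabla^c A=0$ and $\nabla^c\langle\,,\,\rangle=0$ from Proposition~\ref{ck:prop-connection}, and the Gauss equation \eqref{ck:eq-gauss}, which writes $R$ algebraically through $\langle\,,\,\rangle$ and $A$.

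\textbf{Torsion.} Since $\nabla$ is torsion free,
\[
 T^c(X,Y)=\nabla^c_XY-\nabla^c_YX-[X,Y]=-\Gamma_XY+\Gamma_YX .
\]
So $T^c$ is a linear expression in $\Gamma$, and $\nabla^c\Gamma=0$ immediately gives $\nabla^cT^c=0$.

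\textbf{Curvature.} I would substitute $\nabla=\nabla^c+\Gamma$ into the definition of $R$ and obtain the standard formula
\[
 R(X,Y)Z=R^c(X,Y)Z+(\nabla^c_X\Gamma)_YZ-(\nabla^c_Y\Gamma)_XZ
 +\Gamma_{T^c(X,Y)}Z+[\Gamma_X,\Gamma_Y]Z .
\]
The derivative terms vanish by Proposition~\ref{ck:prop-parallelism}, so
\[
 R^c(X,Y)=R(X,Y)-\Gamma_{T^c(X,Y)}-[\Gamma_X,\Gamma_Y].
\]
Getting the signs and the torsion term right in this expansion is the main bookkeeping risk. A safe route is to carry out the computation in a local frame, using $\nabla_XY=\nabla^c_XY+\Gamma_XY$ and $[X,Y]=\nabla^c_XY-\nabla^c_YX-T^c(X,Y)$. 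The exact constants do not affect the conclusion, since every term is in any case a polynomial in $\Gamma$ and $T^c$.

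Each term on the right is then $\nabla^c$-parallel:
\begin{itemize}
 \item $R$ is, by \eqref{ck:eq-gauss}, a polynomial tensor expression in the metric and $A$, both of which are $\nabla^c$-parallel.
 \item $\Gamma_{T^c(X,Y)}$ and $[\Gamma_X,\Gamma_Y]$ are contractions and tensor products of the parallel tensors $\Gamma$ and $T^c$.
\end{itemize}
Because $\nabla^c$ commutes with contractions and obeys the Leibniz rule, it follows that $\nabla^cR^c=0$.

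\textbf{Conclusion.} By Proposition~\ref{ck:prop-connection}, $\nabla^c$ is metric. Together with $\nabla^cT^c=0$ and $\nabla^cR^c=0$, this makes $\nabla^c$ an Ambrose--Singer connection.
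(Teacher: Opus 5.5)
Your proposal is correct and takes the same approach as the paper. Torsion gives $T^c(X,Y)=-\Gamma_XY+\Gamma_YX$, hence $\nabla^cT^c=0$. The Gauss equation with $\nabla^cA=0$ gives $\nabla^cR=0$. The comparison formula $R(X,Y)=R^c(X,Y)+[\Gamma_X,\Gamma_Y]+\Gamma_{T^c(X,Y)}$ then forces $\nabla^cR^c=0$; you derive it correctly, with the terms $(\nabla^c_X\Gamma)_Y-(\nabla^c_Y\Gamma)_X$ vanishing by Proposition~\ref{ck:prop-parallelism}, and you also cite metric compatibility from Proposition~\ref{ck:prop-connection}, which the paper's proof leaves implicit.
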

\begin{proof}
Since $\nabla=\nabla^c+\Gamma$ is torsion-free,
\begin{equation}\label{ck:eq-torsion}
 T^c(X,Y)=-\Gamma_XY+\Gamma_YX.
\end{equation}
Hence $\nabla^cT^c=0$.  The Gauss equation
\eqref{ck:eq-gauss} expresses $R$ algebraically in the metric and $A$;
therefore $\nabla^cR=0$.  With $\nabla^c\Gamma=0$, the
curvature-comparison formula becomes
\begin{equation}\label{ck:eq-curvature-comparison}
 R(X,Y)=R^c(X,Y)+[\Gamma_X,\Gamma_Y]+\Gamma_{T^c(X,Y)}.
\end{equation}
Every term on the right other than $R^c$ is $\nabla^c$-parallel.
Consequently, $\nabla^cR^c=0$.
\end{proof}

\subsection{A proof of Theorem 1.1}
\begin{proof}
Let $M$ be a closed isoparametric hypersurface in $S^{13}$ with $(g, m)=(6, 2)$. By Corollary 3.1,
Propositions 4.1, 4.2 and Corollary 4.1, there exists an Ambrose-Singer connection $\nabla^c$ on $M$. According to \cite{AS58}, $M$ is intrinsically homogeneous.
Consequently, since $M$ is closed and simply connected, by the rigidity theorem of hypersurfaces with type
number larger than two \cite{KN69}, it follows that $M$ must be extrinsically
homogeneous. The proof is completed.
\end{proof}

\begin{center}\footnotesize{ACKNOWLEDGMENTS}\end{center}
%The authors would like to express their sincere gratitude to Professors Joserf Dorfmeister, Hui Ma, Wilderich Tuschmann and Ruobing Zhang for providing the copy of Abresch's Thesis, which plays a very important role in our paper.
The AI tool ChatGPT (OpenAI) was used in the preparation of this manuscript. The authors have independently verified all AI-assisted content and take
full responsibility for the final manuscript.

\end{document}